\documentclass[reqno,12pt]{amsart}
\usepackage{a4wide,color,eucal,enumerate,mathrsfs}
\usepackage[normalem]{ulem}
\usepackage{amsmath,amssymb,epsfig,amsthm} 
\usepackage[latin1]{inputenc}
\usepackage{psfrag}
\usepackage{hyperref,cleveref,tikz}

\def\bint{{\ifinner\rlap{\bf\kern.35em--}
\int\else\rlap{\bf\kern.45em--}\int\fi}\ignorespaces}

\def\bbint{{\ifinner\rlap{\bf\kern.35em--}
\hspace{0.078cm}\int\else\rlap{\bf\kern.45em--}\int\fi}\ignorespaces}

\newcommand{\R}{\mathbb R}

\newcommand{\eps}{\varepsilon}

\newcommand{\crit}{\operatorname{Crit}}

\newtheorem{thm}{Theorem}[section]
\newtheorem{lem}[thm]{Lemma}
\newtheorem{prop}[thm]{Proposition}
\newtheorem{cor}[thm]{Corollary}
\newtheorem{defn}[thm]{Definition}

\numberwithin{equation}{section}

\theoremstyle{remark}
\newtheorem{rem}[thm]{Remark}

\def\bint{{\ifinner\rlap{\bf\kern.35em--}
\int\else\rlap{\bf\kern.45em--}\int\fi}\ignorespaces}

\usepackage{graphicx}
\usepackage{import}
\usepackage{xifthen}
\usepackage{pdfpages}
\usepackage{transparent}

\title[An Almgren-type formula for planar $p$-harmonic functions]{An Almgren-type formula for planar $p$-harmonic functions}
\author{Yi Ru-Ya Zhang}
\dedicatory{Dedicated to Nicola Fusco on the occasion of his 70th birthday}
\date{\today}

\address{State Key Laboratory of Mathematical Sciences, Academy of Mathematics and Systems Science, Chinese Academy of Sciences, Beijing 100190, China}
\address{Academy of Mathematics and Systems Science, the Chinese Academy of Sciences, Beijing 100190, China}
\email{yzhang@amss.ac.cn}
 \thanks{ The author is funded by the National Key R\&D Program of China (Grant No. 2025YFA1018400 \&  No. 2021YFA1003100), NSFC Grant No. 12288201 \& No. 12571128, the Chinese Academy of Sciences, and CAS Project for Young Scientists in Basic Research, Grant No. YSBR-031. 
}

\subjclass[2020]{Primary 35J92; Secondary 35B40, 35B60}
\keywords{$p$-harmonic functions; $p$-Laplace equation; Almgren frequency; critical points; unique continuation.}

\begin{document}

\begin{abstract}
For a nonconstant planar $p$-harmonic function $u$, with $1 < p < \infty$, and according to previous results, most notably Aronsson's fundamental analysis of the hodograph representation in a neighborhood of an isolated critical point, we introduce the flux-normalized frequency
$$
  N_*(x_0,r)  =  \frac{r\displaystyle\int_{B_r(x_0)} |D u|^p\,dx}  {\displaystyle\int_{\partial B_r(x_0)} |D u|^{p-2}(u-u(x_0))^2\,dS}.
$$
This ratio constitutes the natural counterpart of Almgren's frequency function in the nonlinear setting, as it recovers precisely the degree of homogeneity for every homogeneous $p$-harmonic profile. We further establish the corresponding gauge-corrected version of Almgren-type monotonicity and reinterpret, in terms of the frequency function, the classical planar unique continuation property. In addition, we derive a quantitative pinching estimate for the flux-normalized frequency and identify the specific obstruction that prevents a straightforward extension of the method to higher dimensions.
\end{abstract}


\maketitle

\section{Introduction}

Almgren's frequency function \cite{A1979} is one of the most useful ways of measuring homogeneity. If $u$ is harmonic in a ball $B_R(x_0)\subset \mathbb R^n$
and $u(x_0)=0$, then
$$
 N_2(x_0,r)  = \frac{r\int_{B_r(x_0)}|D u|^2\,dx} {\int_{\partial B_r(x_0)}u^2\,dS}
$$
is nondecreasing in $r$, where $dS$ denotes the surface measure on the sphere. Moreover $N_2$ is constant exactly on homogeneous harmonic functions, and the limiting value $N_2(x_0,0+)$ is the vanishing order of $u$ at $x_0$. This observation, going back to Almgren, has become a central tool in unique continuation and in the analysis of nodal and singular sets; see also the work of Garofalo--Lin \cite{GL1986} for the elliptic unique-continuation setting.

The purpose of the present note is to formulate the corresponding
frequency calculation for planar $p$-harmonic functions,
$$
\operatorname{div}(|D u|^{p-2}D u)=0, \qquad 1<p<\infty .
$$
The nonlinear equation has an immediate difficulty which is absent when
$p=2$: The natural elliptic weight
$$
 A(x):=|D u(x)|^{p-2}
$$
depends on the unknown solution and degenerates or becomes singular at
critical points. Thus the usual harmonic height
$\int_{\partial B_r}u^2\,dS$ is no longer the correct object if one
wants an identity calibrated to the homogeneity of $p$-harmonic
profiles.

There are two related, important, but distinct nonlinear precedents. First, stationary points of the $p$-energy satisfy the standard
energy monotonicity formula
$$
  \frac{d}{dr}\left(  r^{p-n}\int_{B_r(x_0)} |Du|^p\,dx
  \right) = p r^{p-n}\int_{\partial B_r(x_0)} |Du|^{p-2}(\partial_\nu u)^2\,dS \ge 0 ,
$$
which follows from the Rellich--Pohozaev identity; see, for instance,
Hardt--Lin \cite[Lemma 4.1]{HL1987} and the exposition in
\cite[Section 4]{M2009}. This formula measures the scaling of the
$p$-energy, but it is not an Almgren frequency quotient: It does not contain
a boundary height and does not return the homogeneity degree of a homogeneous
$p$-harmonic profile. Second, Granlund--Marola
\cite{GM2014} studied the unique-continuation problem for the
$p$-Laplace equation through nonlinear analogues of Almgren's frequency,
such as quotients with boundary height 
$$\int_{\partial B_r}|u|^p\,dS,$$
and proved conditional consequences from boundedness of such frequencies.

In the current manuscript we use the following flux-normalized height. After subtracting the value
at the center, write
$$
  v(x)=u(x)-u(x_0),
$$
and define
$$
  D(x_0,r)=\int_{B_r(x_0)}|D u|^p\,dx,
  \qquad H(x_0,r)=\int_{\partial B_r(x_0)} |D u|^{p-2}v^2\,dS .
$$
Whenever $H(x_0,r)>0$, set
$$
 N_*(x_0,r) =  \frac{rD(x_0,r)}{H(x_0,r)} .
$$
This normalization is forced by homogeneity. Indeed, if
$$
  u(x_0+r\theta)-u(x_0)=r^\lambda \Phi(\theta),
$$
then
$$
 \partial_\nu u=\frac{\lambda}{r}\,v ,
$$
and testing the $p$-Laplace equation by $v$ gives
$$
\int_{B_r(x_0)}|D u|^p\,dx
  = \int_{\partial B_r(x_0)}
|D u|^{p-2}v\,\partial_\nu u\,dS
 =\frac{\lambda}{r} \int_{\partial B_r(x_0)}|D u|^{p-2}v^2\,dS .
$$
Consequently $N_*(x_0,r)\equiv\lambda$ on every homogeneous
$p$-harmonic profile. This is the main reason for us to consider the weighted boundary height in the definition of $H$.

The price of this exact normalization is that the derivative of $H$
contains a new term. Namely,
$$
\frac{d}{dr}H(x_0,r)= \frac{1}{r}H(x_0,r)+2D(x_0,r)+K(x_0,r),
$$
where
$$
 K(x_0,r) = \int_{\partial B_r(x_0)} v^2\,\partial_\nu(|D u|^{p-2})\,dS .
$$
The term $K$ vanishes in the harmonic case $p=2$, but for
$p\ne2$ it is the only obstruction to the classical square formula.
Indeed, the elementary computation Proposition~\ref{thm:exact-identity} below gives the exact identity
$$
  N_*'(x_0,r)+N_*(x_0,r)\Theta(x_0,r)
 = \frac{pr}{H(x_0,r)} \int_{\partial B_r(x_0)}
|D u|^{p-2} \left( \partial_\nu u-\frac{N_*(x_0,r)}{r}v
  \right)^2\,dS ,
$$
where
$$
 \Theta(x_0,r)   =\frac{K(x_0,r)}{H(x_0,r)} - \frac{(p-2)(N_*(x_0,r)-1)}{r}.
$$
Thus the gauge-corrected quantity
$$
 \exp\left(\int_{r_0}^r \Theta(x_0,s)\,ds\right)N_*(x_0,r)
$$
is monotone whenever $\Theta(x_0,\cdot)\in L^1(0,r_0)$; see Corollary~\ref{cor:gauge-formal}.

The point at which the proof becomes genuinely two-dimensional is the
integrability of this obstruction at critical points. In the plane this follows from the fundamental representation theorem of Aronsson
\cite{A1989}, which describes a $p$-harmonic function near a
zero of its gradient by means of the stream function and the hodograph
transform. In particular, if $x_0$ is an isolated critical point, then
there is a nonzero homogeneous $p$-harmonic function $P$ and numbers
$\lambda>1$, $\delta>0$ such that, after subtracting $u(x_0)$,
$$
  u(x_0+r\theta)=P(r\theta)+O(r^{\lambda+\delta}),
 \qquad  D u(x_0+r\theta)=D P(r\theta)  +O(r^{\lambda-1+\delta}),
$$
uniformly in $\theta\in S^1$. The same representation gives the following important logarithmic-gradient asymptotic
$$
 \partial_r\log|D u(x_0+r\theta)|
 = \frac{\lambda-1}{r}+O(r^{-1+\delta}).
$$
This last estimate is precisely what makes the singular parts of
$K/H$ and $(p-2)(N_*-1)/r$ cancel.

We emphasize the historical status of the consequences below. The planar theory of $p$-harmonic functions has a long independent history. Aronsson's work on singular $p$-harmonic functions and planar hodograph representations \cite{A1986,A1988,A1989}, together with the stream-function point of view developed by Aronsson--Lindqvist \cite{AL1988},
gave a detailed description of the local structure near critical points. The quasiregular-gradient approach of Bojarski--Iwaniec \cite{BI1987} and Manfredi \cite{M1988}, as well as the regularity work of Iwaniec--Manfredi \cite{IM1989} and Lewis's discussion of planar critical points \cite{L1994}, provide another standard route to the same two-dimensional structure; see also the monograph \cite{AIM2009} for the quasiregular background. Thus the
homogeneous blow-up statement in Corollary~\ref{cor:blowup} and the finite-order statement in Corollary~\ref{cor:finite-order} are consequences of Aronsson's representation theorem at critical points, together with the ordinary Taylor expansion at noncritical points. The planar unique continuation statement in Corollary~\ref{cor:ucp} is also
classical: It was proved by Alessandrini \cite{A1987} through the study of planar critical points, and it also follows from the quasiregular gradient method of Bojarski--Iwaniec \cite{BI1987} and Manfredi \cite{M1988}. The new point of this manuscript is therefore not a new proof of the planar structure theory, but the identification of the flux-normalized frequency and the exact gauge-corrected Almgren identity which packages the currently known structure in a frequency-function form.



More precisely, in the current manuscript, we employ earlier work, especially Aronsson's theorem, within a real-variable asymptotic framework, as developed in Section~\ref{sec:prelim}. The subsequent result is most appropriately interpreted as a restated  theorem on the flux-normalized growth, rather than as a genuinely new proof of the planar structure theory. The existence of the leading homogeneous profile is a consequence of Aronsson's original work. The primary purpose of this note is to isolate the notion of flux-normalized frequency and to clarify explicitly how Aronsson's prominent planar asymptotic theory entails a $p$-Almgren-type growth theorem in the planar setting.

\begin{thm}[Planar  $p$-Almgren theorem]\label{thm:main}
Let $u$ be a nonconstant $p$-harmonic function in a planar domain $\Omega$, and let $x_0\in\Omega$.  Put $v=u-u(x_0)$.  For all sufficiently small $r>0$, define
$$
  D(x_0,r)=\int_{B_r(x_0)} |D u|^p\,dx,
\quad 
  H(x_0,r)=\int_{\partial B_r(x_0)} |D u|^{p-2}v^2\,dS,
$$
and
$$
  N_*(x_0,r)=\frac{rD(x_0,r)}{H(x_0,r)}.
$$
If $u$ is not locally constant near $x_0$, then $H(x_0,r)>0$ for all sufficiently small $r$ and the limit
$$
  N_*(x_0,0+)=\lim_{r\to0}N_*(x_0,r)
$$
exists.

If $D u(x_0)\ne0$, then $ N_*(x_0,0+)=1.$
If $D u(x_0)=0$ and the Aronsson order of $x_0$ is $N\ge1$, then
$$
  N_*(x_0,0+)=\lambda_N(p),
$$
where
$$
  \lambda_N(p)=\frac{\beta_{N+1}(p)}{\beta_{N+1}(p)-1}
$$
and
$$
  \beta_{N+1}(p)
  =
  \frac{\sqrt{(p-2)^2+4(p-1)((N+1)/N)^2}-(p-2)}{2}.
$$
Moreover, in both cases there are constants $C>0$, $\delta>0$, and $r_0>0$ such that
$$
  |N_*(x_0,r)-N_*(x_0,0+)|\le Cr^\delta
  \qquad 0<r<r_0.
$$

In addition, the obstruction $\Theta$ defined as \eqref{eq:theta-defn} in Proposition~\ref{thm:exact-identity} belongs to $L^1(0,r_0)$, and the gauge-corrected frequency \eqref{eq:gauge-frequency} in Corollary \ref{cor:gauge-formal} is nondecreasing.
\end{thm}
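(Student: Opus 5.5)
The plan is to treat the regular and the critical case separately. In each case we insert the asymptotic expansions of Section~\ref{sec:prelim} into $D$ and $H$, read off the limit of $N_*$ together with a power rate, and then use the exact identity of Proposition~\ref{thm:exact-identity} to deduce $\Theta\in L^1$ and the monotonicity of the gauge-corrected frequency.

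\emph{Noncritical case} ($Du(x_0)\neq0$). Here $u$ is real analytic near $x_0$ (the equation is uniformly elliptic there), so
\[
v(x_0+r\theta)=r\,Du(x_0)\cdot\theta+O(r^2),\qquad |Du|^{p-2}=|Du(x_0)|^{p-2}+O(r).
\]
Substituting gives
\[
D=\pi r^2|Du(x_0)|^p\,(1+O(r)),\qquad H=|Du(x_0)|^{p-2}\,r^3\int_{S^1}(Du(x_0)\cdot\theta)^2\,d\theta\,(1+O(r)).
\]
The angular integral equals $\pi|Du(x_0)|^2$, so $H=\pi r^3|Du(x_0)|^p(1+O(r))$. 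Hence $H>0$ for small $r$, $N_*\to1$, and $|N_*-1|\le Cr$. In this case $\partial_\nu(|Du|^{p-2})=O(1)$, so $K=O(r^3)$ and $K/H=O(1)$; also $(N_*-1)/r=O(1)$. Thus $\Theta$ is bounded.

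\emph{Critical case.} Let $N$ be the Aronsson order. We use the expansion quoted in the introduction,
\[
v=P+O(r^{\lambda+\delta}),\qquad Du=DP+O(r^{\lambda-1+\delta}),
\]
where $P=r^\lambda\Phi(\theta)$ is the homogeneous profile with $\lambda=\lambda_N(p)$. The formula $\lambda=\beta/(\beta-1)$ is the standard relation from the hodograph/stream-function construction, which we take from Section~\ref{sec:prelim}. The main care needed here is with the weight $|Du|^{p-2}$ when $p<2$, since it is singular where $|DP|$ vanishes. For this we use that $|DP(\theta)|$ is bounded below on $S^1$: a homogeneous planar $p$-harmonic profile has no critical points off the origin, again by Aronsson. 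Consequently
\[
|Du|^{p-2}=|DP|^{p-2}\,(1+O(r^\delta))
\]
uniformly on $\partial B_r$. It follows that $D$ and $H$ equal their profile values times $1+O(r^\delta)$. The profile values are $c\,r^{\lambda p-p+2}$ and $c'\,r^{\lambda p-p+3}$, with the same positive constant up to the factor $\lambda$, by the homogeneity computation in the introduction. Therefore $H>0$ and $|N_*-\lambda|\le Cr^\delta$. (An alternative for $D$ is to compute it as the boundary flux $\int_{\partial B_r}|Du|^{p-2}v\,\partial_\nu u$, which uses only boundary asymptotics.)

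\emph{Integrability of $\Theta$.} The logarithmic-gradient asymptotic gives
\[
\partial_\nu(|Du|^{p-2})=(p-2)|Du|^{p-2}\Bigl(\frac{\lambda-1}{r}+O(r^{-1+\delta})\Bigr).
\]
Hence $K/H=(p-2)(\lambda-1)/r+O(r^{-1+\delta})$. Since $(p-2)(N_*-1)/r=(p-2)(\lambda-1)/r+O(r^{-1+\delta})$ as well, the singular parts cancel and $\Theta=O(r^{-1+\delta})\in L^1(0,r_0)$. The identity of Proposition~\ref{thm:exact-identity} then shows that the right-hand side is nonnegative. So $\bigl(e^{\int\Theta}N_*\bigr)'=e^{\int\Theta}\bigl(N_*'+\Theta N_*\bigr)\ge0$, which is Corollary~\ref{cor:gauge-formal}.

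\emph{Main obstacle.} The hard step is the uniform $1+O(r^\delta)$ control of the weight and of $\partial_r\log|Du|$ at a critical point. This requires the gradient expansion together with a lower bound on $|DP|$ on the circle, which is exactly where the planar Aronsson theory is used.
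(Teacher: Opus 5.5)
Your proposal is correct and follows the paper's proof essentially step for step. You use the same split into the noncritical and critical cases, the same $1+O(r^\delta)$ asymptotics for $D$ and $H$ drawn from Theorem~\ref{thm:aronsson-package} (with $D_0=\lambda H_0$ from the flux identity for the homogeneous profile), and the same cancellation of the $(p-2)(\lambda-1)/r$ terms via the logarithmic-gradient expansion to get $\Theta=O(r^{-1+\delta})$. The only cosmetic difference is in the regular case, where you compute the constants explicitly ($D_0=H_0=\pi|Du(x_0)|^p$) rather than invoking the flux identity for the linear profile.
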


When $p=2$, one has $A=|D u(x)|^{p-2}\equiv 1$, $K\equiv 0$, and 
$\Theta\equiv 0$. The identity therefore reduces exactly to the classical Almgren square formula. Thus the only genuinely nonlinear feature is the failure of the weighted height to differentiate without producing the logarithmic-gradient term $\partial_\nu(|Du|^{p-2})$.

For $0<a<r_0$, define the integrating factor
$$
  \Gamma_a(r)
  = \exp\left(\int_a^r \Theta(s)\,ds\right), \qquad a<r<r_0,
$$
and the corresponding corrected frequency
$$
        \mathcal N_a(r)=\Gamma_a(r)N_\ast(r).
$$
For $0<a<b<r_0$, define
$$
\mathcal R(a,b)
=
\int_a^b
\frac{s}{H(s)}
\int_{\partial B_s(x_0)}
A
\left(
\partial_\nu u-\frac{N_\ast(s)}{s}v
\right)^2
\,dS\,ds .
$$
More generally, for a fixed number $\lambda$, set
$$
\mathcal R_\lambda(a,b)
=
\int_a^b
\frac{s}{H(s)}
\int_{\partial B_s(x_0)}
A
\left(
\partial_\nu u-\frac{\lambda}{s}v
\right)^2
\,dS\,ds .
$$
Then as a consequence of Theorem~\ref{thm:main}, we obtain the following quantitative pinching of the flux-normalized frequency. 
\begin{thm} 
\label{thm:quantitative-pinching}
Let $u$ be a nonconstant planar $p$-harmonic function in $B_{r_0}(x_0)$.
Assume that $H(s)>0$ and that $\Theta\in L^1(a,b)$ for some
$0<a<b<r_0$. Then
$$
\mathcal N_a(b)-\mathcal N_a(a)= p
\int_a^b \Gamma_a(s) \frac{s}{H(s)} \int_{\partial B_s(x_0)} A \left( \partial_\nu u-\frac{N_\ast(s)}{s}v
\right)^2 \,dS\,ds .
$$
In particular,
$$
\mathcal N_a(b)-\mathcal N_a(a)\ge 0.
$$

Assume moreover that
$$
  \mathcal N_a(b)-\mathcal N_a(a)\le \epsilon
$$
and
$$
   \omega(a,b):=\int_a^b |\Theta(s)|\,ds\le \eta .
$$
Then
$$
  \mathcal R(a,b) \le  \frac{e^\eta}{p}\,\epsilon .
$$
If, in addition, $N_\ast(a)\le \Lambda$, and if we set
$ \lambda=N_\ast(a),$
then
$$
 \sup_{a\le s\le b}|N_\ast(s)-\lambda|  \le    e^\eta\epsilon+\Lambda(e^\eta-1),
$$
and
$$
\mathcal R_\lambda(a,b)
\le \frac{2e^\eta}{p}\,\epsilon
+ 2\log\frac ba
\left[ e^\eta\epsilon+\Lambda(e^\eta-1)
\right]^2 .
$$
\end{thm}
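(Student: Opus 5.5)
The argument rests on the exact identity of Proposition~\ref{thm:exact-identity},
$$
N_*'(s)+N_*(s)\Theta(s)=\frac{ps}{H(s)}\int_{\partial B_s(x_0)}A\Bigl(\partial_\nu u-\frac{N_*(s)}{s}v\Bigr)^2\,dS ,
$$
which holds for a.e.\ $s\in(a,b)$. Here $N_*$ is locally absolutely continuous because $H>0$ and $D,H$ are absolutely continuous. Since $\Gamma_a'=\Theta\Gamma_a$ a.e.\ and $\Theta\in L^1(a,b)$, the function $\Gamma_a$ is absolutely continuous and positive. Hence $(\Gamma_aN_*)'=\Gamma_a(N_*'+\Theta N_*)$ a.e. Multiplying the identity by $\Gamma_a$ and integrating from $a$ to $b$ gives the stated formula for $\mathcal N_a(b)-\mathcal N_a(a)$. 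Nonnegativity is immediate, because every factor in the integrand is nonnegative.

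For the bound on $\mathcal R$, note that $\Gamma_a(s)\ge e^{-\int_a^s|\Theta|}\ge e^{-\eta}$. The identity then gives $\epsilon\ge p\,e^{-\eta}\mathcal R(a,b)$, that is, $\mathcal R(a,b)\le e^\eta\epsilon/p$.

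For the sup bound, the same identity on $[a,s]$ shows $0\le\mathcal N_a(s)-\mathcal N_a(a)\le\epsilon$, and $\mathcal N_a(a)=N_*(a)=\lambda$. Thus $\lambda\le\Gamma_a(s)N_*(s)\le\lambda+\epsilon$. Dividing by $\Gamma_a(s)\in[e^{-\eta},e^\eta]$ gives
$$
e^{-\eta}\lambda\le N_*(s)\le e^\eta(\lambda+\epsilon).
$$
Consequently $N_*(s)-\lambda\le e^\eta\epsilon+(e^\eta-1)\lambda$ and $\lambda-N_*(s)\le(1-e^{-\eta})\lambda\le(e^\eta-1)\lambda$. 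Since $0\le\lambda\le\Lambda$, both directions are bounded by $e^\eta\epsilon+\Lambda(e^\eta-1)$.

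For $\mathcal R_\lambda$, write
$$
\partial_\nu u-\frac{\lambda}{s}v=\Bigl(\partial_\nu u-\frac{N_*(s)}{s}v\Bigr)+\frac{N_*(s)-\lambda}{s}v
$$
and apply $(x+y)^2\le2x^2+2y^2$. The first part contributes at most $2\mathcal R(a,b)\le 2e^\eta\epsilon/p$. For the second part, the weighted integral $\int_{\partial B_s}Av^2\,dS$ is exactly $H(s)$, so that term equals
$$
2\int_a^b\frac{s}{H(s)}\,\frac{(N_*(s)-\lambda)^2}{s^2}\,H(s)\,ds=2\int_a^b\frac{(N_*(s)-\lambda)^2}{s}\,ds\le2\log\frac ba\,\sup_{[a,b]}(N_*-\lambda)^2 .
$$
Inserting the sup bound from the previous step gives the claimed estimate.

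The only delicate step is justifying the a.e.\ differentiation: the absolute continuity of $N_*$ and $\Gamma_a$ on $[a,b]$, and the validity of Proposition~\ref{thm:exact-identity} for a.e.\ $s$ even when critical points lie on the circles. I would handle this by citing the regularity used in that proposition, namely that $u\in C^{1,\alpha}$ and that $A\,Du\in W^{1,2}_{\rm loc}$ in the plane. This makes $H$ absolutely continuous, and $D$ is absolutely continuous with $D'=\int_{\partial B_s}|Du|^p\,dS$. The remaining steps are elementary inequalities.
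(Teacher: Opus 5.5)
Your proposal is correct and follows the paper's proof step for step. Like the paper, you multiply the identity of Proposition~\ref{thm:exact-identity} by $\Gamma_a$ and integrate, and you use $e^{-\eta}\le\Gamma_a\le e^{\eta}$ to bound $\mathcal R(a,b)$. You then get the sup bound from $\lambda\le\mathcal N_a(s)\le\lambda+\epsilon$; your two-sided bound is only a rearrangement of the paper's split $N_\ast(s)-\lambda=\Gamma_a^{-1}(\mathcal N_a(s)-\lambda)+\lambda(\Gamma_a^{-1}-1)$. Finally you split the defect with $(X+Y)^2\le 2X^2+2Y^2$, as the paper does.

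The paper invokes the proposition without discussing circles through critical points. If you want to justify that step, the natural tool is the discreteness of $\crit(u)$: only finitely many radii in $[a,b]$ meet critical points, and $\Theta\in L^1(a,b)$ supplies the integrability needed across them. The condition $A\,Du\in W^{1,2}_{\rm loc}$ that you cite is the wrong tool, since it controls $|Du|^{p-1}$ rather than the weight $A=|Du|^{p-2}$ appearing in $H$ when $1<p<2$.
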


Let us also comment on the role of Theorem~\ref{thm:quantitative-pinching}.
Theorem~\ref{thm:main} gives the existence of the limiting flux-normalized
frequency and the corresponding gauge-corrected monotonicity. 
Theorem~\ref{thm:quantitative-pinching} is the quantitative form of this
monotonicity: If the corrected frequency has small increase on an annulus and
if the total gauge error
$$
        \int_a^b |\Theta(s)|\,ds
$$
is controlled, then the weighted radial homogeneity defect
$$
        \partial_\nu u-\frac{N_\ast(s)}{s}\bigl(u-u(x_0)\bigr)
$$
is small in an integral sense. Moreover, after freezing the exponent at
$\lambda=N_\ast(a)$, one obtains a corresponding estimate for
$$
        \partial_\nu u-\frac{\lambda}{s}\bigl(u-u(x_0)\bigr).
$$
Thus Theorem~\ref{thm:quantitative-pinching} may be viewed as an annular
almost-rigidity statement: Near constancy of the corrected frequency forces the
solution to be close, in the natural weighted radial sense, to a homogeneous
profile.

Finally, in Section~\ref{sec:higher} we explain which part of the argument is dimension-free and which part is genuinely planar. The algebraic flux-Almgren identity itself survives in every dimension. In fact, the same
weighted height, the same frequency $N_\ast$, and the same obstruction $\Theta$ lead formally to the same square identity in $\mathbb R^n$.
Therefore, the essential issue in dimensions $n\ge 3$ is not the Rellich--Pohozaev computation, but the integrability of $\Theta$ near critical points. In the plane this integrability follows from Aronsson's hodograph representation and the resulting logarithmic-gradient asymptotic. However, in higher dimensions such a structure theorem is not presently available in
general. Section~\ref{sec:higher} records the dimension-free identity and gives a conditional criterion, in terms of a nondegenerate homogeneous tangent and a weighted logarithmic-gradient asymptotic, under which the same conclusion holds. It seems to the author that, the higher-dimensional extension of the planar $p$-Almgren theorem is essentially a regularity and critical-set structure
problem.

\medskip

\noindent{\bf Acknowledgment}: The author would like to express heartfelt thanks to Nicola Fusco for introducing him to the field of calculus of variations and elliptic PDE, especially on the $p$-harmonic functions.

\section{Preliminaries} \label{sec:prelim}

Throughout the paper, $1<p<\infty$.  For $x_0\in\R^n$ and $r>0$, we set
$$
  B_r(x_0)=\{x\in\R^n: |x-x_0|<r\},
  \qquad   \partial B_r(x_0)=\{x\in\R^n: |x-x_0|=r\}.
$$
When $x_0=0$, simply write $B_r=B_r(0)$.  The outward unit normal on $\partial B_r(x_0)$ is denoted by $\nu$, and
$$
  \partial_\nu u=D u\cdot \nu.
$$
Surface measure on $\partial B_r(x_0)$ is denoted by $dS$.

\begin{defn}[Weak $p$-harmonicity]
Let $\Omega\subset\R^2$ be open.  A function $u\in W^{1,p}_{\mathrm{loc}}(\Omega)$ is called $p$-harmonic in $\Omega$ if
$$
  \int_\Omega |D u|^{p-2}D u\cdot D\varphi\,dx=0
$$
for every test function $\varphi\in C_c^\infty(\Omega)$.
\end{defn}

We shall use the following standard regularity and planar structure facts.  The local $C^{1,\alpha}$ regularity is due to the general theory of degenerate elliptic equations; see, for example, DiBenedetto \cite{D1983}, Lewis \cite{L1983}, Tolksdorf \cite{T1984}, as well as the nice exposition by Lindqvist \cite{L2006}.  The specifically planar assertions rely on Bojarski--Iwaniec \cite{BI1987}, Manfredi \cite{M1988}, and Aronsson \cite{A1989}. We also refer to the sharp regularity result of Iwaniec and Manfredi \cite{IM1989}. 

\begin{thm}[Regularity of planar $p$-harmonic functions] \label{thm:aronsson-package}
Let $u$ be a nonconstant $p$-harmonic function in a planar domain $\Omega$.
\begin{enumerate}[\textup{(\roman*)}]
\item The critical set
$$
  \crit(u):=\{x\in\Omega:D u(x)=0\}
$$
is discrete.  In particular, if $x_0\in\crit(u)$, then there exists $\rho>0$ such that $D u\ne0$ in $B_\rho(x_0)\setminus\{x_0\}$.

\item If $x_0\notin\crit(u)$, then $u$ is real analytic in a neighborhood of $x_0$.

\item Let $x_0\in\crit(u)$.  Then there are an integer $N\ge1$, a number $\delta>0$, and a nonzero homogeneous $p$-harmonic function $P$ in $\R^2$ such that, after subtracting the constant $u(x_0)$,
$$
  P(r\theta)=r^{\lambda_N(p)}\Phi(\theta),
  \qquad \theta\in\mathbb S^1,
$$
and, uniformly for $\theta\in\mathbb S^1$ as $r\to0$,
$$
  u(x_0+r\theta)-u(x_0)=P(r\theta)+O(r^{\lambda_N(p)+\delta}),
$$
$$
  D u(x_0+r\theta)=D P(r\theta)+O(r^{\lambda_N(p)-1+\delta}).
$$
Moreover $|D P(\theta)|>0$ on $\mathbb S^1$, and the hodograph representation gives the logarithmic-gradient expansion
$$
  \partial_r\log |D u(x_0+r\theta)|
  =
  \frac{\lambda_N(p)-1}{r}+O(r^{-1+\delta})
$$
uniformly in $\theta$.
\end{enumerate}
Here
$$
  \lambda_N(p)=\frac{\beta_{N+1}(p)}{\beta_{N+1}(p)-1},
$$
with
$$
  \beta_{N+1}(p)
  =
  \frac{\sqrt{(p-2)^2+4(p-1)((N+1)/N)^2}-(p-2)}{2}.
$$
For $p=2$, the same statement is understood in the classical harmonic sense and gives $\lambda_N(2)=N+1$.
\end{thm}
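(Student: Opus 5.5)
Since this theorem packages known planar results, the plan is to assemble it from three ingredients rather than prove anything from scratch: the quasiregularity of the complex gradient, elliptic regularity away from critical points, and Aronsson's hodograph analysis near a critical point. Throughout I would use $u\in C^{1,\alpha}_{\mathrm{loc}}$, which the DiBenedetto--Lewis--Tolksdorf theory provides.

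For (i), I would take the complex gradient $f=u_x-iu_y$. By Bojarski--Iwaniec and Manfredi, $f$ is $K$-quasiregular with $K$ depending only on $p$; more precisely, $f\in W^{1,2}_{\mathrm{loc}}$ solves a Beltrami-type equation whose ellipticity comes from $|p-2|/p<1$. Since $u$ is nonconstant, $f\not\equiv0$. By Stoilow factorization, $f=h\circ\chi$ with $h$ holomorphic and $\chi$ quasiconformal. Hence the zeros of $f$, which form exactly $\crit(u)$, are isolated.

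For (ii), near a point with $Du(x_0)\neq0$ the equation $\operatorname{div}(|Du|^{p-2}Du)=0$ is uniformly elliptic. Since $u\in C^{1,\alpha}$, I would first apply Schauder bootstrapping to the linearized equation to get $u\in C^\infty$. Real analyticity then follows from the analytic-hypoellipticity theorem (Morrey/Hopf) for nonlinear elliptic equations with analytic structure.

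For (iii), I would follow Aronsson. Introduce the stream function $\psi$ with $D\psi=|Du|^{p-2}(-u_y,u_x)$ on a punctured disc $B_\rho(x_0)\setminus\{x_0\}$, which is free of critical points by (i). In the hodograph variables $(\log|Du|,\arg Du)$, the pair $(u,\psi)$ satisfies a \emph{linear} elliptic system, and the winding of $\arg Du$ around $x_0$ defines the integer $N\ge1$ (the Aronsson order, equal to the local index of $f$). I would separate variables in the hodograph plane. The admissible angular modes compatible with $N$-fold winding give the exponent $\beta_{N+1}(p)$ as the positive root of the stated quadratic, so the leading separable solution is a homogeneous $p$-harmonic $P$ of degree $\lambda_N(p)=\beta_{N+1}/(\beta_{N+1}-1)$. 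The remainder is controlled by the next mode, which has strictly larger exponent; this produces the gap $\delta>0$ and the expansions of $u-u(x_0)$ and $Du$. That $|DP|>0$ on $\mathbb S^1$ follows because the hodograph map of $P$ is a local diffeomorphism off the origin. For $p=2$ the same scheme reduces to the Taylor expansion of a harmonic function with a zero of order $N+1$.

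The main obstacle is the logarithmic-gradient expansion, because it requires a \emph{differentiated} asymptotic, whereas the $C^1$ expansion alone only gives $|Du|\sim r^{\lambda_N-1}$. The approach I would try first stays in the hodograph plane. There, $t=\log|Du|$ is an independent variable and $\log r$ is an explicit function of $(t,\arg Du)$: its leading term is $t/(\lambda_N-1)$ plus a bounded angular correction, followed by a remainder of order $e^{c\delta t}$ whose derivatives are of the same size, since the hodograph series may be differentiated termwise. I would then invert this relation. The chain rule gives $\partial_r t=(\lambda_N-1)/r+O(r^{-1+\delta})$ uniformly in $\theta$, which is exactly the claimed estimate for $\partial_r\log|Du(x_0+r\theta)|$.
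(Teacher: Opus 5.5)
Your proposal is correct and takes the same route as the paper. The paper does not prove this theorem; it quotes it from the literature: quasiregularity of the complex gradient (Bojarski--Iwaniec, Manfredi) for (i), uniform ellipticity plus analytic regularity away from critical points for (ii), and Aronsson's hodograph representation for (iii), with the logarithmic-gradient expansion attributed to the convergent hodograph series. Your inversion of the differentiated hodograph series is a faithful elaboration of that last remark: it uses leading term $\log r = t/(\lambda_N(p)-1)+g(\vartheta)$, remainders small together with their derivatives, and the profile's hodograph map being a local diffeomorphism so that $\partial\theta/\partial\vartheta$ stays away from zero.
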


\begin{rem}
The integer $N$ in Theorem \ref{thm:aronsson-package} is Aronsson's order of the critical point.  Aronsson defines it through the first nonvanishing term in the quasiregular representation of the complex gradient  The formula for $\beta_{N+1}(p)$ comes from the constant-coefficient hodograph equations in Aronsson's representation theorem.  Then the leading mode is $N+1$ rather than $N$, which is why the index $N+1$ appears.
\end{rem}

\begin{rem}
The uniform logarithmic-gradient expansion is a consequence of the convergent hodograph series.  It is stronger than a mere $C^1$ asymptotic and is the point where  the planar proof uses Aronsson's representation theorem rather than only general regularity.
\end{rem}

\subsection{The flux-normalized quantities}

Let $u$ be $p$-harmonic in $B_R(x_0)\subset\R^2$.  We always center the height at the value $u(x_0)$.  Thus set
$$
  v(x)=u(x)-u(x_0).
$$
For $0<r<R$, define
$$
  D(r)=\int_{B_r(x_0)} |D u|^p\,dx
$$
and
$$
  H(r)=\int_{\partial B_r(x_0)} |D u|^{p-2}v^2\,dS.
$$
Whenever $H(r)>0$, define the flux-normalized frequency
$$
  N_*(r)=\frac{rD(r)}{H(r)}.
$$
We also write
$$
  A(x)=|D u(x)|^{p-2}.
$$
Thus
$$
  H(r)=\int_{\partial B_r(x_0)} A v^2\,dS.
$$

\begin{lem} \label{lem:height-positive}
Assume that $u$ is not identically equal to $u(x_0)$ in $B_R(x_0)$.  Then $H(r)>0$ for every sufficiently small admissible radius $r>0$.
\end{lem}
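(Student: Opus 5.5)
We split according to whether $x_0$ is a critical point of $u$, and in each case we show that $A v^2$ is strictly positive on most of $\partial B_r(x_0)$ for small $r$. Since $u$ is not identically equal to $u(x_0)$ in $B_R(x_0)$ and $u$ is $p$-harmonic, it is nonconstant on $B_R(x_0)$. We may therefore apply Theorem~\ref{thm:aronsson-package} there. (The unique continuation property behind this is not needed as a separate input: the structure theorem already applies to the nonconstant function $u$ on the ball.)

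\emph{Case $D u(x_0)\neq0$.} By Theorem~\ref{thm:aronsson-package}(ii), $u$ is real analytic near $x_0$, so
$$
v(x_0+r\theta)=r\,D u(x_0)\cdot\theta+O(r^2)
$$
uniformly in $\theta$. Also $A=|Du|^{p-2}$ is continuous and positive near $x_0$, and for $p<2$ it is bounded near $x_0$ because $Du$ does not vanish there. Choose a closed arc $I\subset\mathbb S^1$ of positive length on which $|D u(x_0)\cdot\theta|\ge c>0$. For small $r$ the remainder gives $|v|\ge cr/2$ on $x_0+rI$, and $A\ge \tfrac12|Du(x_0)|^{p-2}$ there. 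Hence
$$
H(r)\ge c' r^3>0 .
$$

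\emph{Case $D u(x_0)=0$.} By Theorem~\ref{thm:aronsson-package}(i), $Du\neq0$ on $B_\rho(x_0)\setminus\{x_0\}$. For $0<r<\rho$ the weight $A$ is therefore finite, continuous and strictly positive on $\partial B_r(x_0)$, so the integrand $Av^2$ is well defined there even when $p<2$. By part (iii),
$$
v(x_0+r\theta)=r^{\lambda}\Phi(\theta)+O(r^{\lambda+\delta}),\qquad \lambda=\lambda_N(p).
$$
Here $\Phi\not\equiv0$ because $P$ is nonzero, and $\Phi$ is continuous. Fix an arc $I$ on which $|\Phi|\ge c>0$. For small $r$ this gives $|v|\ge cr^\lambda/2$ on $x_0+rI$. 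Thus $Av^2>0$ on a set of positive measure of $\partial B_r(x_0)$, and $H(r)>0$. If a quantitative bound is wanted, combining the expansion of $Du$ with $|DP(\theta)|>0$ yields $A\approx r^{(\lambda-1)(p-2)}$ uniformly on the circle.

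The only delicate point is the critical case. There one must ensure that the leading profile $\Phi$ is not identically zero, which follows from $P\neq0$. One must also ensure that $A$ does not vanish or blow up on $\partial B_r(x_0)$, which is exactly what the discreteness in part (i) provides. Both are supplied directly by Theorem~\ref{thm:aronsson-package}, so the lemma reduces to these asymptotic lower bounds.
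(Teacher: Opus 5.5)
Your proof is correct, but it takes a different route from the paper's. The paper argues by contradiction and uses only part (i) of Theorem~\ref{thm:aronsson-package}. For small $r$ the weight $A$ is strictly positive on $\partial B_r(x_0)$, so $H(r)=0$ would force $v=0$ on that circle. Uniqueness of the $p$-energy minimizer with zero boundary data then gives $v\equiv0$ in $B_r(x_0)$, contradicting that $u$ is not locally constant. The noncritical case gets only a one-line remark. You instead argue directly from the leading-order asymptotics (the Taylor expansion at a regular point and Aronsson's expansion (iii) at a critical point), and you exhibit an arc of $\partial B_r(x_0)$ on which $|v|\gtrsim r$, respectively $|v|\gtrsim r^{\lambda}$.

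The paper's argument is softer. It needs no expansion, only discreteness of $\crit(u)$ plus uniqueness for the Dirichlet problem. It is also not really tied to small radii: it works at any radius where $A>0$ almost everywhere on the circle. Yours invokes the deep part (iii) of the structure theorem, but in return it is quantitative. It gives $H(r)\gtrsim r^{3}$ at a regular point and, using $A\approx r^{(\lambda-1)(p-2)}$, $H(r)\gtrsim r^{p\lambda-p+3}$ at a critical point. That is exactly the order of $H(r)$ used in the proof of Theorem~\ref{thm:main}, and in particular it confirms that the leading coefficient $H_0$ there is positive. It also settles the noncritical case explicitly for every small $r$. The paper's sentence that $v$ ``cannot vanish identically on all small circles'' leaves that step implicit.
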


\begin{proof}
If $x_0\notin\crit(u)$, then $|D u|$ is bounded below near $x_0$, and the assertion follows since $v$ cannot vanish identically on all small circles unless $u\equiv u(x_0)$ locally.

Suppose $x_0\in\crit(u)$.  By Theorem \ref{thm:aronsson-package}, the critical point is isolated, so $A>0$ on $\partial B_r(x_0)$ for all sufficiently small $r>0$.  If $H(r)=0$, then $v=0$ on $\partial B_r(x_0)$.  Since $u$ minimizes the $p$-Dirichlet energy among functions with the same boundary values, the unique minimizer with zero boundary values is $v\equiv0$ in $B_r(x_0)$.  This contradicts the assumption that $u$ is not locally constant.  Hence $H(r)>0$.
\end{proof}

\subsection{Basic identities}

The first identity is the boundary flux identity.

\begin{lem} \label{lem:flux}
Let $u$ be $p$-harmonic in $B_R(x_0)$.  For every $0<r<R$ for which the trace identity is valid,
$$
  D(r)=\int_{\partial B_r(x_0)} A v\partial_\nu u\,dS.
$$
In particular the identity holds for all small $r$ in the setting of Theorem \ref{thm:main} below.
\end{lem}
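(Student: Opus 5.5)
The plan is to obtain the identity by testing the weak formulation of the $p$-Laplace equation against $v\eta$, where $\eta$ is a radial cutoff approximating $\chi_{B_r(x_0)}$, and then passing to the limit. Since $D u$ is $C^{0,\alpha}$ (in fact $C^{1,\alpha}$-regular $u$), the vector field $|D u|^{p-2}D u$ is continuous, so all integrands are continuous on $\overline{B_R(x_0)}$ away from nothing problematic; note $|D u|^{p-2}D u$ is continuous even when $p<2$, since its modulus is $|D u|^{p-1}\to 0$ at critical points.

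Concretely, fix $0<r<R$ and, for small $\epsilon>0$, let $\eta_\epsilon(x)=\psi_\epsilon(|x-x_0|)$, where $\psi_\epsilon\equiv1$ on $[0,r-\epsilon]$, $\psi_\epsilon\equiv0$ on $[r,\infty)$, and $\psi_\epsilon$ is linear in between. Then $\varphi=v\eta_\epsilon$ is Lipschitz with compact support in $B_R(x_0)$, so by density it is an admissible test function for the weak equation; density is justified because $u\in W^{1,p}_{\mathrm{loc}}$ and $|D u|^{p-2}D u\in L^{p'}_{\mathrm{loc}}$. Since $D(v\eta_\epsilon)=\eta_\epsilon D u + v D\eta_\epsilon$ and $D\eta_\epsilon=-\epsilon^{-1}\nu\,\chi_{\{r-\epsilon<|x-x_0|<r\}}$, this gives
$$
\int_{B_R(x_0)}\eta_\epsilon |D u|^p\,dx=\frac1\epsilon\int_{r-\epsilon<|x-x_0|<r} A\,v\,\partial_\nu u\,dx .
$$
As $\epsilon\to0$, the left side tends to $D(r)$ by dominated convergence. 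The right side is the average over $s\in(r-\epsilon,r)$ of $g(s)=\int_{\partial B_s(x_0)}A v\partial_\nu u\,dS$. The integrand $A v\partial_\nu u=v\,|D u|^{p-2}D u\cdot\nu$ is continuous, so $g$ is continuous in $s$ and the average converges to $g(r)$. This yields the identity for every $0<r<R$.

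The only point I expect to need care is the case $p<2$, where $A$ itself blows up at critical points. There the argument must be phrased through the continuous field $|D u|^{p-2}D u$ rather than through $A$ separately. In the setting of Theorem~\ref{thm:main} this is harmless anyway: by Theorem~\ref{thm:aronsson-package}(i) the critical points are isolated, so $A$ is continuous and positive on $\partial B_r(x_0)$ for small $r$, which gives the ``in particular'' clause.
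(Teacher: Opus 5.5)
Your proof is correct, but it follows a different route from the paper's. The paper argues classically. Where $u$ is smooth with $Du\neq0$ on $\overline{B_r(x_0)}$, it applies the divergence theorem to $vA\,Du$. At a planar critical center, it excises $B_\eps(x_0)$ and uses Aronsson's expansion ($|v|=O(\eps^\lambda)$, $|Du|=O(\eps^{\lambda-1})$) to show the inner boundary term is $O(\eps^{p\lambda-p+2})$. For the general weak case it only appeals to an unspecified ``standard regularization approximation''.

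You instead test the weak equation directly with the Lipschitz function $v\eta_\eps$. You then pass to the limit using only the continuity of $v\,|Du|^{p-2}Du$ given by $C^{1,\alpha}$ regularity. Continuity of the gradient is exactly what lets you conclude at every radius, rather than only at Lebesgue points of $s\mapsto\int_{\partial B_s(x_0)}A v\partial_\nu u\,dS$.

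What this buys is generality and self-containedness. Your identity holds for every $0<r<R$ and in any dimension, whether or not critical points lie inside $B_r(x_0)$ or on $\partial B_r(x_0)$. It needs no isolatedness of $\crit(u)$, no analyticity, and no Aronsson asymptotics. It also replaces the paper's sketched regularization step by a complete argument, and covers the later uses of the flux identity for homogeneous profiles $P$ in $\R^n$.

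The paper's route is tied to the planar structure theory it uses elsewhere. Even there, the inner boundary term vanishes simply because $v\to0$ and $|Du|^{p-1}$ is bounded, so the Aronsson rates are not actually needed.

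One cosmetic correction: continuity of the integrands is only needed, and only available, on compact subsets of $B_R(x_0)$ such as $\overline{B_r(x_0)}$, not on $\overline{B_R(x_0)}$.
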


\begin{proof}
If $u$ is smooth and $D u\ne0$ in a neighborhood of $\overline{B_r(x_0)}$, then
$$
  \operatorname{div}(AD u)=0
$$
pointwise, and the divergence theorem gives
$$
  0=\int_{B_r(x_0)} \operatorname{div}(vAD u)\,dx
    -\int_{B_r(x_0)} AD u\cdot D v\,dx.
$$
Since $D v=D u$, this is exactly the desired identity.

At a planar critical center, choose $r$ so small that $x_0$ is the only critical point in $B_r(x_0)$.  Apply the preceding smooth argument on the annulus
$$
  B_r(x_0)\setminus \overline{B_\eps(x_0)}.
$$
The inner boundary contribution is
$$
  \int_{\partial B_\eps(x_0)} A v\partial_\nu u\,dS.
$$
By the asymptotic expansion in Theorem \ref{thm:aronsson-package}, if the critical homogeneity is $\lambda>1$, then
$$
  |v|=O(\eps^\lambda),
  \qquad
  |D u|=O(\eps^{\lambda-1}).
$$
Thus the absolute value of the inner boundary contribution is bounded by
$$
  C\eps\,\eps^\lambda\,\eps^{(p-1)(\lambda-1)}
  =C\eps^{p\lambda-p+2},
$$
which tends to zero.  Letting $\eps\to0$ gives the identity.  The noncritical case follows from the smooth argument, and the general weak case follows by the standard regularization approximation used for the $p$-Laplace equation.
\end{proof}

The second identity is the planar Pohozaev identity.  We state it in the form needed here.

\begin{lem} \label{lem:pohozaev}
Let $u$ be $p$-harmonic in $B_R(x_0)$.  For every sufficiently small admissible radius $r$ in the planar setting considered below,
$$
  D'(r)=\frac{2-p}{r}D(r)+p\int_{\partial B_r(x_0)} A(\partial_\nu u)^2\,dS.
$$
\end{lem}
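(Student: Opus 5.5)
The identity will come from the Rellich--Pohozaev computation, i.e. from integrating the divergence of the vector field
$$
X=|Du|^p\,(x-x_0)-p\,|Du|^{p-2}\bigl((x-x_0)\cdot Du\bigr)\,Du .
$$
I first do the case where $Du\ne0$ on a neighbourhood of $\overline{B_r(x_0)}$. There $u$ is real analytic by Theorem~\ref{thm:aronsson-package}(ii) and $\operatorname{div}(ADu)=0$ holds pointwise. Write $x-x_0=y$ and use
$$
\operatorname{div}(|Du|^p y)=2|Du|^p+p|Du|^{p-2}\,D^2u\,Du\cdot y
$$
together with
$$
\operatorname{div}\bigl(A(y\cdot Du)Du\bigr)=(y\cdot Du)\operatorname{div}(ADu)+A\,Du\cdot D(y\cdot Du)=A|Du|^2+A\,D^2u\,y\cdot Du .
$$
The Hessian terms cancel, so
$$
\operatorname{div}X=(2-p)|Du|^p .
$$
On $\partial B_r(x_0)$ we have $y=r\nu$. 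The divergence theorem then gives
$$
r\int_{\partial B_r}|Du|^p\,dS-pr\int_{\partial B_r}A(\partial_\nu u)^2\,dS=(2-p)D(r).
$$
Since $D'(r)=\int_{\partial B_r}|Du|^p\,dS$, this is exactly the claimed formula for $D'(r)$.

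When $x_0$ is a critical point, I follow the scheme of Lemma~\ref{lem:flux}. By Theorem~\ref{thm:aronsson-package}(i), choose $r$ so small that $x_0$ is the only critical point in $\overline{B_r(x_0)}$. Apply the smooth computation on the annulus $B_r(x_0)\setminus\overline{B_\eps(x_0)}$. The inner boundary produces an error term bounded by
$$
C\eps\cdot\eps\cdot\sup_{\partial B_\eps}|Du|^p=O\bigl(\eps^{2+p(\lambda-1)}\bigr),
$$
where $\lambda=\lambda_N(p)>1$ and the gradient bound $|Du|=O(\eps^{\lambda-1})$ comes from Theorem~\ref{thm:aronsson-package}(iii). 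This error tends to zero as $\eps\to0$. Since $|Du|^p$ is continuous, hence integrable, $\int_{B_r\setminus B_\eps}|Du|^p\to D(r)$. The identity therefore passes to the limit.

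If instead $x_0$ is noncritical but some critical point lies in $B_r(x_0)$, it is one of finitely many isolated critical points. I excise a small disc around each of them in the same way, and the same gradient bound kills each boundary contribution. Finally, $D$ is absolutely continuous with $D'(r)=\int_{\partial B_r}|Du|^p\,dS$ for every $r$, because $|Du|^p$ is continuous. So the identity holds for every small $r$, and no restriction to a.e.\ $r$ is needed.

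I expect the main obstacle to be justifying the pointwise divergence computation, since $u$ is only $C^{1,\alpha}$ near critical points. The excision argument handles this, because it only uses smoothness away from the discrete critical set, where $u$ is analytic. The one quantitative input that still needs checking is the decay of $|Du|$ near each excised critical point, and that is supplied by the asymptotics in Theorem~\ref{thm:aronsson-package}(iii).
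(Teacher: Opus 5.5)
Your proof is correct and follows essentially the same route as the paper: the Rellich--Pohozaev identity for the smooth solution away from the discrete critical set, applied on the annulus $B_r(x_0)\setminus\overline{B_\eps(x_0)}$, with the inner boundary term vanishing as $\eps\to0$. Your explicit divergence computation for $X$ just makes concrete what the paper phrases as the radial domain variation. Your inner-boundary estimate is in fact $O(\eps^2\sup|Du|^p)$, so mere boundedness of $Du$ would already suffice there, without Aronsson's decay rate.
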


\begin{proof}
For smooth solutions with $D u\ne0$ this is the standard Rellich-Pohozaev identity obtained by testing the stationarity of the $p$-energy under the radial domain variation $x\mapsto x+t(x-x_0)$.  In the weak setting one obtains the same formula by regularizing the integrand, for instance replacing $|D u|^p$ by $(|D u|^2+\eps)^{p/2}$, applying the domain variation to the smooth regularized minimizers, and letting $\eps\to0$; see Hardt--Lin \cite[Lemma 4.1]{HL1987} and the exposition in Maldonado \cite[Section 4]{M2009}.

In the present planar application, if $x_0$ is not critical, $u$ is smooth near $x_0$ and the identity is classical.  If $x_0$ is a critical point, then by Theorem \ref{thm:aronsson-package} the only critical point in a small ball is $x_0$.  Apply the smooth identity on the annulus $B_r(x_0)\setminus \overline{B_\eps(x_0)}$ and let $\eps\to0$.  The inner boundary terms vanish since Theorem \ref{thm:aronsson-package} gives
$$
  |u-u(x_0)|=O(\eps^\lambda),
  \qquad
  |D u|=O(\eps^{\lambda-1})
$$
with $\lambda>1$ at a critical point.  Thus every inner boundary term is bounded by a positive power of $\eps$ and tends to zero.
\end{proof}

\section{The exact flux-Almgren identity}

Let
$$
  G(r)=\int_{\partial B_r(x_0)} A(\partial_\nu u)^2\,dS
$$
and
$$
  K(r)=\int_{\partial B_r(x_0)} v^2\partial_\nu A\,dS;
$$
recall that $  A(x)=|D u(x)|^{p-2}.$
The term $K(r)$ can be understood in the classical sense for all small $r$ around a planar critical point, since $D u\ne0$ on the circle and $u$ is real analytic away from the critical point.

\begin{lem}\label{lem:height-derivative}
For every small admissible radius,
$$
  H'(r)=\frac1r H(r)+2D(r)+K(r).
$$
\end{lem}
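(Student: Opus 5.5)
The plan is to rescale $H$ to the unit circle, differentiate under the integral sign, and then convert the resulting cross term into $D(r)$ by the divergence theorem together with Lemma~\ref{lem:flux}. Write
$$
  H(r)=r\int_{\mathbb S^1} A(x_0+r\theta)\,v(x_0+r\theta)^2\,d\theta .
$$
For every small admissible $r>0$ the circle $\partial B_r(x_0)$ avoids $\crit(u)$. This follows from Theorem~\ref{thm:aronsson-package}(i), or from $Du(x_0)\neq0$ in the noncritical case. By Theorem~\ref{thm:aronsson-package}(ii), $u$ is real analytic in a neighborhood of that circle, and $A=|Du|^{p-2}$ is smooth there. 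Hence the integrand is smooth in $(r,\theta)$ on a small interval of radii, and we may differentiate under the integral sign:
$$
  H'(r)=\frac1r H(r)+\int_{\partial B_r(x_0)}\bigl(v^2\partial_\nu A+2Av\,\partial_\nu v\bigr)\,dS .
$$

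The first piece of the integral is exactly $K(r)$. Since $Dv=Du$, the second piece is
$$
  2\int_{\partial B_r(x_0)} Av\,\partial_\nu u\,dS ,
$$
and by Lemma~\ref{lem:flux} this equals $2D(r)$. Combining the three contributions gives
$$
  H'(r)=\frac1r H(r)+2D(r)+K(r).
$$

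The only delicate point is justifying the differentiation when $x_0$ is critical. It is handled by shrinking the range of $r$ so that a closed annulus around the circle contains no critical point. On that annulus $A$ is real analytic, so the $r$-derivative of the integrand is continuous and bounded there, and dominated convergence applies. Any singular behaviour at $x_0$ enters only through Lemma~\ref{lem:flux}, which was already proved by excising $B_\eps(x_0)$ and using the Aronsson asymptotics. No further limiting argument is needed, and I expect no genuine obstacle beyond this bookkeeping.
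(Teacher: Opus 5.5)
Your proposal is correct and follows essentially the same route as the paper: you rescale $H$ to the unit circle, differentiate under the integral sign, split $\partial_\nu(Av^2)=v^2\partial_\nu A+2Av\,\partial_\nu u$, and convert the cross term into $2D(r)$ via Lemma~\ref{lem:flux}. Your extra justification for differentiating under the integral, namely that $u$ is real analytic near circles avoiding the discrete critical set, is a welcome detail that the paper leaves implicit.
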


\begin{proof}
Write $x=x_0+r\theta$, $\theta\in\mathbb S^1$.  Then
$$
  H(r)=r\int_{\mathbb S^1} A(x_0+r\theta)v(x_0+r\theta)^2\,d\theta.
$$
Differentiating under the integral sign gives
$$
  H'(r)=\frac1rH(r)+\int_{\partial B_r(x_0)} \partial_\nu(A v^2)\,dS.
$$
Since
$$
  \partial_\nu(A v^2)=v^2\partial_\nu A+2Av\partial_\nu u,
$$
the flux identity gives
$$
  \int_{\partial B_r(x_0)}2Av\partial_\nu u\,dS=2D(r).
$$
This proves the formula.
\end{proof}

\begin{prop}\label{thm:exact-identity}
Let $u$ be a nonconstant planar $p$-harmonic function in $B_R(x_0)$, and suppose that $u$ is not locally constant at $x_0$.  For all sufficiently small admissible radii, define
\begin{equation}\label{eq:theta-defn}
      \Theta(r)  =  \frac{K(r)}{H(r)}  -  \frac{(p-2)(N_*(r)-1)}{r}.
\end{equation}
Then
$$
  N_*'(r)+N_*(r)\Theta(r)
  =
  \frac{pr}{H(r)}
  \int_{\partial B_r(x_0)}
  A\left(\partial_\nu u-\frac{N_*(r)}{r}v\right)^2\,dS.
$$
In particular,
$$
  N_*'(r)+N_*(r)\Theta(r)\ge0.
$$
\end{prop}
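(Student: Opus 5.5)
The plan is a direct logarithmic differentiation of $N_*=rD/H$, using only Lemma~\ref{lem:pohozaev} for $D'$, Lemma~\ref{lem:height-derivative} for $H'$, and Lemma~\ref{lem:flux} to expand the square on the right-hand side. I will work at small admissible radii. There $H>0$ by Lemma~\ref{lem:height-positive}. Also $D>0$, since otherwise $Du\equiv0$ in $B_r(x_0)$, contradicting that $u$ is not locally constant. Finally $D$, $H$, $G$, $K$ are differentiable or continuous in $r$, because $Du\neq0$ on the circles and $u$ is real analytic there by Theorem~\ref{thm:aronsson-package}.

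\emph{Step 1 (logarithmic derivative).} Writing $G(r)=\int_{\partial B_r(x_0)}A(\partial_\nu u)^2\,dS$, the two lemmas give
$$
\frac{N_*'}{N_*}=\frac1r+\frac{D'}{D}-\frac{H'}{H}
=\frac{2-p}{r}+\frac{pG}{D}-\frac{2D}{H}-\frac{K}{H}.
$$
Multiplying by $N_*=rD/H$ and using $rD^2/H^2=N_*^2/r$ yields
$$
N_*'=\frac{(2-p)N_*}{r}+\frac{prG}{H}-\frac{2N_*^2}{r}-\frac{N_*K}{H}.
$$

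\emph{Step 2 (adding the gauge term).} Adding $N_*\Theta=N_*K/H-(p-2)N_*(N_*-1)/r$ cancels the $K$ term. The linear terms $(2-p)N_*/r$ and $(p-2)N_*/r$ also cancel. The quadratic terms combine as $-2N_*^2/r-(p-2)N_*^2/r=-pN_*^2/r$. Hence
$$
N_*'+N_*\Theta=\frac{prG}{H}-\frac{pN_*^2}{r}.
$$

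\emph{Step 3 (identifying the square).} Expanding the right-hand side of the claimed identity gives
$$
\frac{pr}{H}\Big[G-\frac{2N_*}{r}\int_{\partial B_r(x_0)}Av\,\partial_\nu u\,dS+\frac{N_*^2}{r^2}H\Big].
$$
By Lemma~\ref{lem:flux} the middle integral equals $D$. Since $prD/H=pN_*$, this becomes
$$
\frac{prG}{H}-\frac{2pN_*^2}{r}+\frac{pN_*^2}{r}=\frac{prG}{H}-\frac{pN_*^2}{r},
$$
which matches Step 2. Nonnegativity follows because $A\ge0$ and $H>0$.

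The algebra is routine. The only genuine point is justifying that Lemmas~\ref{lem:flux}, \ref{lem:pohozaev} and \ref{lem:height-derivative} hold at every small radius when $x_0$ is a critical point. I will take these as already established via the annulus argument with vanishing inner boundary terms. Differentiability of $D$ and $H$ in $r$ then follows from the smoothness of $u$ away from the isolated critical point $x_0$.
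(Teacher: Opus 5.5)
Your proposal is correct and follows the paper's proof essentially line by line. Both arguments logarithmically differentiate $N_*=rD/H$ using Lemmas~\ref{lem:pohozaev} and \ref{lem:height-derivative}, cancel the $K$-term and the linear terms against $N_*\Theta$ to reach $prG/H-pN_*^2/r$, and match this with the expanded square via the flux identity. Your explicit check that $D(r)>0$, which is needed to divide by $D$, is a small point the paper leaves implicit.
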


\begin{proof}
For readability write $N=N_*(r)$.  By definition,
$$
  \frac{N'}{N}=\frac1r+\frac{D'}{D}-\frac{H'}{H}.
$$
Using Lemmas \ref{lem:pohozaev} and \ref{lem:height-derivative},
$$
  \frac{N'}{N}
  =
  \frac{2-p}{r}+p\frac{G}{D}-2\frac{D}{H}-\frac{K}{H}.
$$
Since $D/H=N/r$, this becomes
$$
  N'
  =
  N\left(p\frac{G}{D}-\frac{2N+p-2}{r}-\frac{K}{H}\right).
$$
Therefore
$$
  N'+N\Theta
  =
  N\left(p\frac{G}{D}-\frac{pN}{r}\right).
$$
On the other hand, by the flux identity,
$$
  \int_{\partial B_r}A v\partial_\nu u\,dS=D.
$$
Hence
\begin{align*}
&\frac{pr}{H}\int_{\partial B_r} A\left(\partial_\nu u-\frac{N}{r}v\right)^2\,dS \\
&\quad =\frac{pr}{H}\left(G-2\frac{N}{r}D+\frac{N^2}{r^2}H\right) \\
&\quad =N\left(p\frac{G}{D}-\frac{pN}{r}\right).
\end{align*}
The two expressions are equal.
\end{proof}

\begin{cor} \label{cor:gauge-formal}
Assume that $\Theta\in L^1(0,r_0)$.  Then
\begin{equation}\label{eq:gauge-frequency}
r\mapsto  \exp\left(\int_{r_0}^r \Theta(s)\,ds\right)N_*(r) 
\end{equation}

is nondecreasing on $(0,r_0)$.
\end{cor}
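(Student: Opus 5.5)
The plan is to differentiate the product $\mathcal G(r)=\exp\bigl(\int_{r_0}^r\Theta(s)\,ds\bigr)N_*(r)$ and use Proposition~\ref{thm:exact-identity}. First I will fix the regularity needed on $(0,r_0)$. By Theorem~\ref{thm:aronsson-package}(i), I shrink $r_0$ so that $x_0$ is the only possible critical point in $B_{r_0}(x_0)$. By Lemma~\ref{lem:height-positive}, I may also assume $H>0$ on $(0,r_0)$. Away from $x_0$, $u$ is real analytic by Theorem~\ref{thm:aronsson-package}(ii), so $A$, $v$ and $\partial_\nu A$ are smooth on every closed annulus $\{s\le|x-x_0|\le t\}$ with $0<s<t<r_0$. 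It follows that $H$, $K$ and $G$ are $C^1$ (indeed continuous suffices for $K$) on $(0,r_0)$. Likewise $D$ is $C^1$, since $D'(r)=\int_{\partial B_r}|Du|^p\,dS$ is continuous. Hence $N_*$ is $C^1$ on $(0,r_0)$ and $\Theta$ is continuous there.

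Next I set $E(r)=\int_{r_0}^r\Theta(s)\,ds$. The hypothesis $\Theta\in L^1(0,r_0)$ makes $E$ finite for every $r\in(0,r_0)$, and since $\Theta$ is continuous, $E$ is $C^1$ on $(0,r_0)$ with $E'=\Theta$ by the fundamental theorem of calculus. Then
$$
\mathcal G'(r)=e^{E(r)}\bigl(N_*'(r)+N_*(r)\Theta(r)\bigr)
=e^{E(r)}\frac{pr}{H(r)}\int_{\partial B_r(x_0)}A\Bigl(\partial_\nu u-\frac{N_*(r)}{r}v\Bigr)^2\,dS\ \ge 0,
$$
by Proposition~\ref{thm:exact-identity} together with $A\ge0$ and $H>0$. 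A $C^1$ function with nonnegative derivative on an interval is nondecreasing, which gives the claim.

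The only delicate point is justifying that Lemmas~\ref{lem:pohozaev} and~\ref{lem:height-derivative} hold for every $r\in(0,r_0)$ rather than almost every $r$, so that $N_*$ is genuinely differentiable. This follows from the analyticity on annuli and the $\eps\to0$ argument already given in Lemma~\ref{lem:pohozaev}. Note that the integrability of $\Theta$ near $0$ is used only to make the normalization at $r_0$ meaningful. If needed, one can avoid even that by proving monotonicity on each $[a,r_0)$ with $\int_a^r$ and then multiplying by the constant $e^{\int_{r_0}^a\Theta}$.
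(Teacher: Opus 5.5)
Your argument is correct and is the paper's proof: multiply the identity of Proposition~\ref{thm:exact-identity} by $\exp\bigl(\int_{r_0}^r\Theta(s)\,ds\bigr)$, whose derivative is $\Theta$ times itself, and observe that the right-hand side stays nonnegative. You also make explicit the regularity the paper leaves implicit: you shrink $r_0$ so that $x_0$ is the only critical point, which makes $N_*$ of class $C^1$ and $\Theta$ continuous on $(0,r_0)$. One small correction to your closing remark: the monotonicity itself uses only local integrability of $\Theta$ on $(0,r_0]$, not integrability near $0$. The $L^1$ condition near $0$ is what keeps the gauge factor bounded above and below as $r\to0$, so that the corrected frequency remains comparable to $N_*$ there.
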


\begin{proof}
Multiply the identity in Proposition~\ref{thm:exact-identity} by the integrating factor
$$
  \exp\left(\int_{r_0}^r\Theta(s)\,ds\right).
$$
The right-hand side remains nonnegative.
\end{proof}

\section{The planar $p$-Almgren theorem}

We now prove the main result.

\subsection{Proof of Theorem~\ref{thm:main} and its corollaries}

\begin{proof}[Proof of Theorem~\ref{thm:main}]
After translating the point and subtracting a constant, assume $x_0=0$ and $u(0)=0$.

First suppose $D u(0)\ne0$.  Since the equation is uniformly elliptic near $0$, the function is smooth there.  We have
$$
  u(x)=\ell(x)+O(|x|^2),
  \qquad
  D u(x)=D\ell(x)+O(|x|),
$$
where $\ell(x)=D u(0)\cdot x$ is a nonzero linear function.  Hence
$$
  D(r)=D_0r^2(1+O(r))
$$
and
$$
  H(r)=H_0r^3(1+O(r)).
$$
For the linear profile $\ell$, the flux identity gives $D_0=H_0$.  Therefore
$$
  N_*(r)=1+O(r).
$$
Also $\partial_\nu\log |D u|=O(1)$, whence $K(r)/H(r)=O(1)$ and $\Theta(r)=O(1)$.  Thus $\Theta\in L^1(0,r_0)$.

Now suppose $D u(0)=0$.  By Theorem \ref{thm:aronsson-package}, there is a nonzero homogeneous $p$-harmonic function
$$
  P(r\theta)=r^\lambda\Phi(\theta),
  \qquad \lambda=\lambda_N(p)>1,
$$
and a number $\delta>0$ such that
$$
  u(r\theta)=P(r\theta)+O(r^{\lambda+\delta})
$$
and
$$
  D u(r\theta)=D P(r\theta)+O(r^{\lambda-1+\delta})
$$
uniformly in $\theta\in\mathbb S^1$.  Since
$$
  |D P(r\theta)|=r^{\lambda-1}|D P(\theta)|
$$
and $|D P(\theta)|>0$ on $\mathbb S^1$, we get
$$
  |D u(r\theta)|^p
  =
  r^{p(\lambda-1)}|D P(\theta)|^p(1+O(r^\delta)).
$$
Thus
$$
  D(r)=D_0 r^{p\lambda-p+2}(1+O(r^\delta)),
$$
where
$$
  D_0=\frac1{p\lambda-p+2}\int_{\mathbb S^1}|D P(\theta)|^p\,d\theta.
$$
Similarly,
$$
  H(r)=H_0 r^{p\lambda-p+3}(1+O(r^\delta)),
$$
where
$$
  H_0=\int_{\mathbb S^1}|D P(\theta)|^{p-2}\Phi(\theta)^2\,d\theta.
$$
For the homogeneous profile $P$, one has
$ \partial_\nu P=\frac{\lambda}{r}P.$
The flux identity applied to $P$ gives
$$
  \int_{B_r}|D P|^p\,dx
  =
  \frac{\lambda}{r}\int_{\partial B_r}|D P|^{p-2}P^2\,dS.
$$
Comparing the powers of $r$ yields $D_0=\lambda H_0$.  Therefore
$$
  N_*(r)=\frac{rD(r)}{H(r)}=\lambda+O(r^\delta).
$$

It remains to check the obstruction.  By Theorem \ref{thm:aronsson-package},
$$
  \partial_r\log |D u(r\theta)|
  =
  \frac{\lambda-1}{r}+O(r^{-1+\delta})
$$
uniformly in $\theta$.  Since
$$
  \partial_\nu A=(p-2)A\partial_\nu\log |D u|,
$$
we obtain
$$
  \frac{K(r)}{H(r)}
  =
  \frac{(p-2)(\lambda-1)}{r}+O(r^{-1+\delta}).
$$
On the other hand,
$$
  \frac{(p-2)(N_*(r)-1)}{r}
  =
  \frac{(p-2)(\lambda-1)}{r}+O(r^{-1+\delta}).
$$
Subtracting gives
$$
  \Theta(r)=O(r^{-1+\delta}).
$$
Thus $\Theta\in L^1(0,r_0)$.  The gauge-corrected monotonicity follows from Corollary \ref{cor:gauge-formal}.
\end{proof}

Now we can reformulate the following known results in frequency language. 

\begin{cor}[Homogeneous blow-up]\label{cor:blowup}
Let $u$, $x_0$, and $\lambda=N_*(x_0,0+)$ be as in Theorem \ref{thm:main}.  If $u$ is not locally constant near $x_0$, then there exists a nonzero homogeneous $p$-harmonic function $P$ of degree $\lambda$ such that
$$
  \frac{u(x_0+r\,\cdot)-u(x_0)}{r^\lambda}
  \longrightarrow P
$$
locally uniformly and in $C^1$ on compact subsets of $\R^2\setminus\{0\}$ as $r\to0$.
\end{cor}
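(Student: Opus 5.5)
We split according to whether $x_0$ is a critical point, after normalizing $x_0=0$ and $u(0)=0$. By Theorem~\ref{thm:main}, $\lambda=N_*(0+)$ equals $1$ when $Du(0)\ne0$ and equals $\lambda_N(p)$ when $Du(0)=0$. In each case we identify the candidate limit $P$ directly from the asymptotics already available. We then show that the rescalings $u_r(y):=u(ry)/r^\lambda$ converge to $P$.

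\emph{Noncritical case.} Here $u$ is real analytic near $0$ by Theorem~\ref{thm:aronsson-package}(ii), so $u(x)=\ell(x)+O(|x|^2)$ with $\ell(x)=Du(0)\cdot x\neq 0$. Then on $|y|\le R$ we have
\[
u_r(y)-\ell(y)=O(rR^2), \qquad Du_r(y)-D\ell(y)=Du(ry)-Du(0)=O(rR).
\]
Hence $u_r\to\ell$ in $C^1$ uniformly on compact sets, even including the origin. The function $P=\ell$ is linear, so it is $1$-homogeneous and $p$-harmonic.

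\emph{Critical case.} Take $P(r\theta)=r^{\lambda}\Phi(\theta)$ as in Theorem~\ref{thm:aronsson-package}(iii); it is nonzero, homogeneous and $p$-harmonic. Fix a compact set $\{\rho\le|y|\le R\}$ and write $y=s\theta$. Homogeneity of $P$ gives $P(ry)=r^\lambda P(y)$ and $DP(ry)=r^{\lambda-1}DP(y)$. The expansion of $u$ gives
\[
\bigl|u_r(y)-P(y)\bigr| = r^{-\lambda}\bigl|u(rs\theta)-P(rs\theta)\bigr|\le C r^{-\lambda}(rs)^{\lambda+\delta}\le C R^{\lambda+\delta} r^\delta .
\]
Since $Du_r(y)=r^{1-\lambda}Du(ry)$, the expansion of the gradient gives
\[
\bigl|Du_r(y)-DP(y)\bigr|\le C r^{1-\lambda}(rs)^{\lambda-1+\delta}\le CR^{\lambda-1+\delta}r^\delta .
\]
Both bounds require $rR$ to be below the radius where the expansions hold, which is true for small $r$. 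These estimates are uniform in $\theta$ and tend to zero, which gives the $C^1$ convergence. They also hold on the full ball $|y|\le R$, since $s\le R$ is all that was used, so in fact the convergence is locally uniform on all of $\R^2$.

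The proof is thus essentially a rescaling of Theorem~\ref{thm:aronsson-package}. The only point needing care is matching the degree: the homogeneity $\lambda$ of $P$ must coincide with the frequency limit $N_*(0+)$. This is exactly the content of Theorem~\ref{thm:main}, which computed $N_*(0+)=\lambda_N(p)$ from the same profile $P$ (and $N_*(0+)=1$ in the noncritical case).
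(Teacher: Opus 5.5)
Your proof is correct and follows the paper's own argument: a Taylor expansion with $P=\ell$ at a noncritical point, and a rescaling of Aronsson's expansion from Theorem~\ref{thm:aronsson-package}(iii) at a critical point, with the degree identified through Theorem~\ref{thm:main}. You additionally write out the explicit $r^\delta$ rescaling estimates that the paper leaves implicit; these also give convergence up to the origin.
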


\begin{proof}
If $D u(x_0)\ne0$, this follows from the Taylor expansion with $P$ equal to the linear part.  If $D u(x_0)=0$, it is exactly the leading term in Aronsson's singular expansion quoted in Theorem \ref{thm:aronsson-package}.
\end{proof}

\begin{cor}[Finite order of vanishing]\label{cor:finite-order}
Let $u$ be a nonconstant planar $p$-harmonic function and $x_0\in\Omega$.  If $u(x_0)=0$, then $u$ has finite vanishing order at $x_0$ in the following sense: either $D u(x_0)\ne0$ and the order is $1$, or $x_0$ is critical and
$$
  |u(x_0+r\theta)|\le Cr^{\lambda_N(p)},
  \qquad
  \sup_{\theta\in\mathbb S^1}|u(x_0+r\theta)|\ge c r^{\lambda_N(p)}
$$
for all sufficiently small $r>0$, with constants $c,C>0$.
\end{cor}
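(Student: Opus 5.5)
The statement to prove is Corollary~\ref{cor:finite-order}. It follows by splitting into the noncritical and critical cases.

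\textbf{Noncritical case.} Suppose $D u(x_0)\ne0$. Then $u$ is real analytic near $x_0$ by Theorem~\ref{thm:aronsson-package}(ii). Since $u(x_0)=0$, Taylor's formula gives
$$u(x_0+r\theta)=r\,D u(x_0)\cdot\theta+O(r^2).$$
Choosing $\theta=D u(x_0)/|D u(x_0)|$ yields $\sup_\theta|u(x_0+r\theta)|\ge |D u(x_0)|r/2$ for small $r$. The upper bound $|u|\le Cr$ is immediate. So the order is $1$, which matches $N_*(x_0,0+)=1$ in Theorem~\ref{thm:main}.

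\textbf{Critical case.} Suppose $D u(x_0)=0$, and let $N\ge1$ be the Aronsson order. Theorem~\ref{thm:aronsson-package}(iii) gives a nonzero homogeneous $P(r\theta)=r^{\lambda}\Phi(\theta)$, with $\lambda=\lambda_N(p)$, such that
$$u(x_0+r\theta)=r^\lambda\Phi(\theta)+O(r^{\lambda+\delta})$$
uniformly in $\theta$.

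\emph{Upper bound.} Since $\Phi$ is continuous on $\mathbb S^1$, it is bounded. Hence $|u(x_0+r\theta)|\le(\max|\Phi|+1)r^\lambda$ for small $r$.

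\emph{Lower bound.} We need $\Phi\not\equiv0$, which holds because $P$ is nonzero. Alternatively, $|D P(\theta)|>0$ on $\mathbb S^1$ forces $\Phi\not\equiv0$: if $\Phi\equiv0$ then $P\equiv0$ and $D P\equiv0$. So $m:=\max_{\mathbb S^1}|\Phi|>0$. Pick $\theta_0$ with $|\Phi(\theta_0)|=m$. Then
$$|u(x_0+r\theta_0)|\ge m r^\lambda-Cr^{\lambda+\delta}\ge \tfrac m2 r^\lambda$$
once $r^\delta<m/(2C)$. This gives the claim with $c=m/2$.

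The only point that needs care is the uniformity of the error term in $\theta$ and the nontriviality of $\Phi$; both are supplied directly by Theorem~\ref{thm:aronsson-package}(iii). The rest is routine.

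We can also phrase the result through the frequency. Theorem~\ref{thm:main} identifies the exponent $\lambda$ in both cases as $N_*(x_0,0+)$. So the vanishing order equals the limiting flux-normalized frequency, as in the linear Almgren theory.
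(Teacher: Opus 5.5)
Your proof is correct and is essentially the paper's argument: the upper bound comes from the uniform Aronsson expansion in Theorem~\ref{thm:aronsson-package}(iii), and the lower bound comes from $\sup_{\mathbb S^1}|\Phi|>0$ combined with the uniform error term, which the paper phrases as the blow-up convergence of Corollary~\ref{cor:blowup} (itself a restatement of that same expansion). Your explicit Taylor-expansion treatment of the noncritical case simply spells out what the paper leaves implicit.
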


\begin{proof}
The upper bound follows from the asymptotic expansion.  For the lower bound, the leading profile $P$ is nonzero, hence $\sup_{\mathbb S^1}|P|>0$.  The convergence in Corollary \ref{cor:blowup} gives the result for small $r$.
\end{proof}

\begin{cor}[Unique continuation in the plane]\label{cor:ucp}
Let $\Omega\subset\R^2$ be connected, and let $u$ be $p$-harmonic in $\Omega$.  If $u$ is constant on a nonempty open subset of $\Omega$, then $u$ is constant in all of $\Omega$.  In particular, if $u=0$ on a nonempty open subset, then $u\equiv0$ in $\Omega$.
\end{cor}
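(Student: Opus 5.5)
The plan is a connectedness argument: we show that the set where $u$ is locally constant is both open and closed in $\Omega$, and then let the frequency results decide what happens at a boundary point of that set. Let $U$ be the set of points $x\in\Omega$ such that $u$ is constant in some neighborhood of $x$. By definition $U$ is open, and it is nonempty by hypothesis. Since $\Omega$ is connected, it suffices to show that $U$ is relatively closed in $\Omega$. Once $U=\Omega$, the function $u$ is locally constant on a connected set, hence constant.

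For closedness, let $x_0\in\Omega$ lie in the closure of $U$. First, $u$ is constant on each component of $U$, with possibly different constants on different components. Choose a small ball $B=B_\rho(x_0)\subset\Omega$ and a point $y\in U\cap B$. The component $W$ of $U\cap B$ containing $y$ carries a constant value $c$. Suppose, for contradiction, that $W\ne B$. Then $W$ has a boundary point $z\in B$, and $z\notin U$ because $W$ is a component of the open set $U\cap B$. By continuity of $u$ (the $C^{1,\alpha}$ regularity), $u(z)=c$, and $Du(z)=0$ since $Du$ vanishes on $W$.

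We now restrict $u$ to a small ball $B_r(z)$ on which $u$ is not locally constant near $z$, which holds because $z\notin U$. Apply Theorem~\ref{thm:main} and Corollary~\ref{cor:finite-order} at $z$, in the version that uses only the local hypothesis ``not locally constant near $z$''; the proof of Theorem~\ref{thm:aronsson-package} is local. These give:
\begin{itemize}
\item $z$ is an isolated critical point of $u$;
\item $\sup_{\mathbb S^1}|u(z+s\theta)-c|\ge c_0 s^{\lambda}$ for small $s$;
\item $Du(z+s\theta)=DP(s\theta)+O(s^{\lambda-1+\delta})$ with $|DP|>0$ on $\mathbb S^1$, hence $Du\ne0$ on every punctured small disc around $z$.
\end{itemize}
But $z\in\partial W$ with $W$ open, so every disc $B_s(z)$ contains points of $W$, where $Du=0$. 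Since those points are distinct from $z$, this contradicts the nonvanishing of $Du$ on the punctured disc. Therefore $W=B$, so $u$ is constant near $x_0$, and $x_0\in U$.

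The main obstacle is to make sure the local version of Theorem~\ref{thm:main} and Theorem~\ref{thm:aronsson-package} is legitimately available at $z$. Those results are stated for $u$ that is ``nonconstant in $\Omega$'', yet at $z$ we only know that $u$ is nonconstant near $z$ while it may be constant on nearby open sets. Assertion (i), discreteness of the critical set, cannot be used globally, since it would already presuppose the conclusion. The plan is therefore to apply Aronsson's representation on a small disc $B_r(z)$, regarding $u$ there as a $p$-harmonic function on that disc. The conclusion $|DP|>0$ on $\mathbb S^1$ then yields isolation of $z$ within the critical set of $u|_{B_r(z)}$, and that isolation is exactly what the argument needs. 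The same argument shows at once that $u\equiv0$ if $u$ vanishes on an open set.
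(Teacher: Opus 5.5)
Your argument is correct, but it takes a longer road than the paper. The paper's proof is one line. If $u$ were nonconstant on $\Omega$ yet constant on an open set $U$, then $U\subset\crit(u)$, which contradicts the discreteness in Theorem~\ref{thm:aronsson-package}(i) applied to $u$ on the domain $\Omega$.

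You instead run an open--closed argument on the set where $u$ is locally constant. You invoke Theorem~\ref{thm:aronsson-package}(iii) only on a small disc $B_r(z)$ around a boundary point $z$ of that set. There $|DP|>0$ on $\mathbb S^1$ forces $Du\ne0$ on a punctured disc, which is incompatible with $z\in\partial W$. What your route does make explicit is that only a local statement is needed: ``a critical point at which $u$ is not locally constant is isolated.'' Connectedness then enters through your clopen argument rather than through the ``planar domain'' hypothesis of Theorem~\ref{thm:aronsson-package}.

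Your stated reason for the detour does not hold up. Theorem~\ref{thm:aronsson-package} assumes only that $u$ is nonconstant on a planar domain, which is exactly the situation under the contradiction hypothesis, so (i) can be used on $\Omega$ without presupposing the conclusion. Conversely, on $B_r(z)$ your function is nonconstant but constant on the open set $W\cap B_r(z)$, which accumulates at $z$. So if (i) or (iii) really needed more than ``nonconstant on a domain,'' your local application would fail for the same reason. The local isolation statement and (i) are in fact equivalent: your clopen argument gives one direction, and restriction to discs gives the other. Both proofs therefore rest on the same input.

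Finally, you do not need Theorem~\ref{thm:main} or the lower bound from Corollary~\ref{cor:finite-order}. The gradient expansion with $|DP|>0$, or (i) applied on the disc, is enough.
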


\begin{proof}
Assume, for contradiction, that $u$ is not constant in $\Omega$ but is constant, say $u=c$, on a nonempty open set $U\subset\Omega$.  Then $D u=0$ at every point of $U$.  This contradicts the discreteness of the critical set for a nonconstant planar $p$-harmonic function, stated in Theorem \ref{thm:aronsson-package}.  Therefore $u$ is constant in $\Omega$.
\end{proof}

\begin{rem}
This is the usual planar unique continuation principle.  The proof above uses the same planar structure theorem that underlies the flux-frequency theorem.  Equivalently, Corollary \ref{cor:finite-order} says that a nonconstant planar $p$-harmonic function cannot vanish to infinite order at a point at which it is not locally zero.
\end{rem}

\subsection{Quantitative pinching of the flux-normalized frequency}
\label{subsec:quantitative-pinching}

In this subsection we prove a quantitative consequence of the exact identity.
The point is that pinching of the corrected frequency is exactly equivalent to smallness of the weighted radial homogeneity defect.

\begin{proof}[Proof of Theorem~\ref{thm:quantitative-pinching}]
By Proposition~\ref{thm:exact-identity},
$$
  N_\ast'(s)+\Theta(s)N_\ast(s)   =  \frac{ps}{H(s)}  \int_{\partial B_s(x_0)}  A   \left(   \partial_\nu u-\frac{N_\ast(s)}{s}v
   \right)^2
   \,dS .
$$
Multiplying by $\Gamma_a(s)$, we obtain
$$
  \frac{d}{ds}\left(\Gamma_a(s)N_\ast(s)\right)  =   p\Gamma_a(s)   \frac{s}{H(s)}  \int_{\partial B_s(x_0)}  A  \left(    \partial_\nu u-\frac{N_\ast(s)}{s}v  \right)^2   \,dS .
$$
Integrating from $a$ to $b$ gives the identity.

Since
$$
   e^{-\eta}\le \Gamma_a(s)\le e^\eta   \qquad a\le s\le b,
$$
the pinching assumption gives
$$
   p e^{-\eta}\mathcal R(a,b)   \le   \mathcal N_a(b)-\mathcal N_a(a)  \le  \epsilon .
$$
Hence
$$
        \mathcal R(a,b)\le \frac{e^\eta}{p}\epsilon .
$$

It remains to prove the fixed-exponent estimate. Since $\mathcal N_a$ is
nondecreasing,
$$
  N_\ast(a)  \le   \mathcal N_a(s) \le    N_\ast(a)+\epsilon   \qquad a\le s\le b .
$$
Since $N_\ast(s)=\Gamma_a(s)^{-1}\mathcal N_a(s)$, we get
$$
\begin{aligned}
|N_\ast(s)-N_\ast(a)|
&\le \Gamma_a(s)^{-1} |\mathcal N_a(s)-N_\ast(a)|
+ N_\ast(a)|\Gamma_a(s)^{-1}-1|  \\
&\le e^\eta\epsilon+\Lambda(e^\eta-1).
\end{aligned}
$$
Now write
$$
\partial_\nu u-\frac{\lambda}{s}v
=
\left(
\partial_\nu u-\frac{N_\ast(s)}{s}v
\right)
+
\frac{N_\ast(s)-\lambda}{s}v .
$$
Using $(X+Y)^2\le 2X^2+2Y^2$, we obtain
$$
\begin{aligned}
\mathcal R_\lambda(a,b)
&\le
2\mathcal R(a,b)
+
2\int_a^b
\frac{s}{H(s)}
\int_{\partial B_s(x_0)}
A
\frac{(N_\ast(s)-\lambda)^2}{s^2}v^2
\,dS\,ds  \\
&=
2\mathcal R(a,b)
+
2\int_a^b
\frac{(N_\ast(s)-\lambda)^2}{s}\,ds .
\end{aligned}
$$
The previous two estimates give
$$
\mathcal R_\lambda(a,b)
\le
\frac{2e^\eta}{p}\epsilon
+
2\log\frac ba
\left[
e^\eta\epsilon+\Lambda(e^\eta-1)
\right]^2 .
$$
This proves the theorem.
\end{proof}

\begin{cor} 
\label{cor:dyadic-pinching}
Let $u$ be a nonconstant planar $p$-harmonic function near $x_0$, and assume
that $x_0\in \operatorname{Crit}(u)$. Let
$$
        \lambda=N_\ast(x_0,0+).
$$
Then there exist constants $C>0$, $\delta>0$, and $r_0>0$ such that, for
$0<r<r_0/2$,
$$
\int_r^{2r} \frac{s}{H(s)}\int_{\partial B_s(x_0)} A\left(\partial_\nu u-\frac{N_\ast(r)}{s}v
\right)^2\,dS\,ds\le C r^\delta .
$$
\end{cor}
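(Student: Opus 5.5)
The plan is to apply Theorem~\ref{thm:quantitative-pinching} on the dyadic annulus $[a,b]=[r,2r]$, with $\lambda=N_\ast(r)$. The smallness parameters $\epsilon$ and $\eta$ will come from the quantitative information of Theorem~\ref{thm:main}.

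First, I fix $r_0$, $C$ and $\delta$ as in Theorem~\ref{thm:main}, shrinking $r_0$ if needed so that $H>0$ on $(0,r_0)$ (Lemma~\ref{lem:height-positive}) and $N_\ast\le \Lambda:=\lambda+1$ there. In the critical case, the proof of Theorem~\ref{thm:main} gives
\[
|\Theta(s)|\le C s^{-1+\delta},\qquad 0<s<r_0 .
\]
Hence
\[
\eta:=\omega(r,2r)\le \frac{C}{\delta}\bigl((2r)^\delta-r^\delta\bigr)\le C' r^\delta .
\]
In particular $e^\eta-1\le C'' r^\delta$ for small $r$.

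Next, I estimate the corrected increment $\epsilon:=\mathcal N_r(2r)-\mathcal N_r(r)$. Since $\Gamma_r(r)=1$, this equals $\Gamma_r(2r)N_\ast(2r)-N_\ast(r)$. I bound it by
\[
|\Gamma_r(2r)-1|\,N_\ast(2r)+|N_\ast(2r)-N_\ast(r)|\le C''r^\delta\Lambda+|N_\ast(2r)-\lambda|+|N_\ast(r)-\lambda| .
\]
The pinching estimate $|N_\ast(s)-\lambda|\le Cs^\delta$ of Theorem~\ref{thm:main} then gives $\epsilon\le C r^\delta$. Monotonicity from Corollary~\ref{cor:gauge-formal} guarantees $\epsilon\ge0$, though only the upper bound matters.

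Then I plug into the last estimate of Theorem~\ref{thm:quantitative-pinching}, with $a=r$, $b=2r$ and frozen exponent $N_\ast(r)$. Note that the integrand in the corollary uses $N_\ast(r)/s$, which is exactly $\mathcal R_{N_\ast(r)}(r,2r)$. This yields
\[
\mathcal R_{N_\ast(r)}(r,2r)\le \frac{2e^\eta}{p}\epsilon+2\log 2\,\bigl[e^\eta\epsilon+\Lambda(e^\eta-1)\bigr]^2\le C r^\delta+Cr^{2\delta}\le Cr^\delta .
\]

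The only real point to check is that the constant in $\Theta=O(s^{-1+\delta})$ is uniform, so that $\eta$ genuinely decays like $r^\delta$ rather than merely being finite. That uniformity comes from the uniform-in-$\theta$ logarithmic-gradient expansion in Theorem~\ref{thm:aronsson-package}(iii), combined with $N_\ast(s)=\lambda+O(s^\delta)$. Without it, one would only get $\eta\to0$ with no rate. Everything else is bookkeeping.
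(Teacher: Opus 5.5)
Your proposal is correct and follows the paper's own proof. The paper likewise takes $N_\ast(s)=\lambda+O(s^\delta)$ and $\Theta(s)=O(s^{-1+\delta})$ from the proof of Theorem~\ref{thm:main}. It deduces $\int_r^{2r}|\Theta|\,ds\le Cr^\delta$, $\Gamma_r(2r)=1+O(r^\delta)$, and hence $\mathcal N_r(2r)-\mathcal N_r(r)=O(r^\delta)$. It then applies Theorem~\ref{thm:quantitative-pinching} on $[r,2r]$ with frozen exponent $N_\ast(r)$; you just write out the bookkeeping (the choice $\Lambda=\lambda+1$ and absorbing $r^{2\delta}$ into $r^\delta$) more explicitly.
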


\begin{proof}
By the proof of Theorem~\ref{thm:main}, after possibly
decreasing $r_0$,
$$
 N_\ast(s)=\lambda+O(s^\delta),  \qquad \Theta(s)=O(s^{-1+\delta})  \qquad 0<s<r_0 .
$$
Hence
$$
   \int_r^{2r}|\Theta(s)|\,ds\le C r^\delta
$$
and
$$
  \Gamma_r(2r)=1+O(r^\delta).
$$
Therefore
$$
\mathcal N_r(2r)-\mathcal N_r(r) = \Gamma_r(2r)N_\ast(2r)-N_\ast(r) = O(r^\delta).
$$
Applying Theorem~\ref{thm:quantitative-pinching} on the annulus
$[r,2r]$, with $a=r$, $b=2r$, and $\lambda=N_\ast(r)$, gives the claim.
\end{proof}

\section{Further remarks on higher-dimensional cases}\label{sec:higher}

The flux normalization indeed makes sense in every dimension.  Let $u$ be $p$-harmonic in $B_R(x_0)\subset\R^n$ and put $v=u-u(x_0)$,
$$
  D(r)=\int_{B_r(x_0)}|D u|^p\,dx,
  \qquad
  H(r)=\int_{\partial B_r(x_0)}|D u|^{p-2}v^2\,dS.
$$
The same computation gives, formally,
$$
  N_*'(r)+N_*(r)\Theta(r)
  =
  \frac{pr}{H(r)}\int_{\partial B_r(x_0)}|D u|^{p-2}
  \left(\partial_\nu u-\frac{N_*(r)}{r}v\right)^2\,dS,
$$
where
$$
  N_*(r)=\frac{rD(r)}{H(r)}
$$
and
$$
  \Theta(r)=
  \frac{\int_{\partial B_r}v^2\partial_\nu(|D u|^{p-2})\,dS}
  {\int_{\partial B_r}|D u|^{p-2}v^2\,dS}
  -
  \frac{(p-2)(N_*(r)-1)}{r}.
$$
The dimension only enters through the two identities
$$
  D'(r)=\frac{n-p}{r}D(r)+p\int_{\partial B_r}|D u|^{p-2}(\partial_\nu u)^2\,dS
$$
and
$$
  H'(r)=\frac{n-1}{r}H(r)+2D(r)+\int_{\partial B_r}v^2\partial_\nu(|D u|^{p-2})\,dS.
$$
Thus the obstruction is structurally identical in all dimensions.

There are two cases in which the higher-dimensional conclusion follows immediately. First, if $D u(x_0)\ne0$, then the equation is uniformly elliptic near $x_0$, and the same Taylor expansion as in the planar proof gives
$$
  N_*(r)=1+O(r),
  \qquad
  \Theta\in L^1(0,r_0).
$$

Second, if one already knows that $u$ has a homogeneous asymptotic expansion
$$
  u(x_0+r\theta)-u(x_0)=r^\lambda\Phi(\theta)+O(r^{\lambda+\delta})
$$
with the corresponding weighted logarithmic-gradient expansion
$$
  \frac{\int_{\partial B_r}|D u|^{p- 2}v^2\partial_\nu\log|D u|\,dS}  {\int_{\partial B_r}|D u|^{p-2}v^2\,dS}
  =  \frac{\lambda-1}{r}+O(r^{-1+\delta}),
$$
then the proof of Theorem \ref{thm:main} gives
$$
  N_*(r)=\lambda+O(r^\delta),
  \qquad  \Theta\in L^1(0,r_0).
$$

The difficulty is that, for $n\ge3$, no analogue of Aronsson's planar hodograph representation is known.  The critical set need not be discrete, and the weight $|D u|^{p-2}$ degenerates or becomes singular on the critical set.  Lindqvist emphasizes that several properties known in the plane, including unique continuation, are not known in space \cite[Section 7]{L2006}.  Granlund--Marola also formulate the frequency approach to unique continuation for the $p$-Laplace equation as an open problem beyond the available planar methods \cite{GM2014}.  Therefore the full higher-dimensional critical-point version of Theorem \ref{thm:main}  presently seems to be a  regularity problem, i.e. 
$$
 \text{whether we have} \  \Theta\in L^1(0,r_0)
  \ \text{ or not.}
$$
Let us explain this in more detail below.
 
\subsection{The  flux-Almgren identity in $\mathbb R^n$}
\label{subsec:dimension-free-identity}

We now record the exact identity in arbitrary dimension. This is useful since it
isolates the only obstruction to a genuine Almgren monotonicity formula.

Let $n\ge 3$, let $u$ be $p$-harmonic in $B_R(x_0)\subset \mathbb R^n$, and set
$$
 v=u-u(x_0),   \qquad  A=|Du|^{p-2}.
$$

Define $D(r), H(r), N_\ast(r), G(r), K(r)  $ similarly as before, and still set
$$
 \Theta(r) = \frac{K(r)}{H(r)} - \frac{(p-2)(N_\ast(r)-1)}{r}.
$$
Then one has the following version of Proposition~\ref{thm:exact-identity} in the higher dimension with the same proof. 
\begin{prop} \label{prop:dimension-free-identity}
Assume that $u$ is smooth and $Du\neq 0$ in an annulus containing
$\partial B_r(x_0)$. Then
$$
  D'(r)=\frac{n-p}{r}D(r)+pG(r),
$$
and
$$
   H'(r) = \frac{n-1}{r}H(r)+2D(r)+K(r).
$$
Consequently, at every such radius for which $H(r)>0$,
$$
  N_\ast'(r)+N_\ast(r)\Theta(r)  =  \frac{pr}{H(r)}
 \int_{\partial B_r(x_0)}  A \left(  \partial_\nu u-\frac{N_\ast(r)}{r}v \right)^2   \,dS .
$$
In particular,
$$
  N_\ast'(r)+N_\ast(r)\Theta(r)\ge 0.
$$
\end{prop}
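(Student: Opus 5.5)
The plan is to repeat the planar argument of Lemma~\ref{lem:pohozaev}, Lemma~\ref{lem:height-derivative} and Proposition~\ref{thm:exact-identity}, tracking where the dimension enters. Since $u$ is smooth with $Du\neq0$ on an annulus $B_{r_2}(x_0)\setminus\overline{B_{r_1}(x_0)}$ containing $\partial B_r(x_0)$, the equation $\operatorname{div}(A\,Du)=0$ holds classically there. The interior of $B_r$ may contain critical points, so every identity that integrates over the whole ball has to be justified separately.

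\textbf{Step 1: the $D'$ formula.} I would use the Rellich--Pohozaev vector field
\[
X=|Du|^p(x-x_0)-p\,|Du|^{p-2}\bigl((x-x_0)\cdot Du\bigr)Du .
\]
Where $u$ is smooth and $p$-harmonic,
\[
\operatorname{div}X=(n-p)|Du|^p .
\]
Integrating over $B_r$ and using $(x-x_0)\cdot\nu=r$ on $\partial B_r$ gives
\[
(n-p)D(r)=r\int_{\partial B_r}|Du|^p\,dS-pr\,G(r).
\]
Combined with $D'(r)=\int_{\partial B_r}|Du|^p\,dS$, this yields $D'=\frac{n-p}{r}D+pG$.

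The global identity over the ball is the stationarity of the $p$-energy under the radial domain variation. For weak solutions it holds for a.e.\ $r$ by the regularization argument cited in Lemma~\ref{lem:pohozaev} (Hardt--Lin): replace $|Du|^p$ by $(|Du|^2+\eps)^{p/2}$ and pass to the limit. Near $\partial B_r$ the convergence is classical, because $Du\neq0$ on the annulus. Hence the formula holds at the given $r$, and the a.e.\ statement is upgraded by continuity of both sides in $r$ on the annulus.

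\textbf{Step 2: the $H'$ formula.} I would write
\[
H(r)=r^{n-1}\int_{\mathbb S^{n-1}}A\,v^2\,d\theta .
\]
Differentiating gives $\frac{n-1}{r}H+\int_{\partial B_r}\partial_\nu(Av^2)\,dS$. Expanding $\partial_\nu(Av^2)=v^2\partial_\nu A+2Av\,\partial_\nu u$ produces $K$ plus twice the boundary flux. The flux identity $D(r)=\int_{\partial B_r}Av\,\partial_\nu u\,dS$ is justified by testing the weak equation with $v\,\chi_\eps$, where $\chi_\eps$ is a radial cutoff approximating $1_{B_r}$. This is valid for all $r$ in the annulus, because the boundary term converges by smoothness of $u$ there.

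\textbf{Step 3: the algebra.} This is verbatim the proof of Proposition~\ref{thm:exact-identity}. The $\frac1r$ in $N'/N$, together with $\frac{n-p}{r}$ from $D'/D$ and $-\frac{n-1}{r}$ from $H'/H$, sums to $\frac{2-p}{r}$, which is dimension-free. The rest proceeds as in the plane: expanding the square and using the flux identity gives the same right-hand side, which is nonnegative.

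\textbf{Main obstacle.} The only delicate point is Step 1, justifying the global Pohozaev identity across possible interior critical points without the planar discreteness. I would rely on the regularization argument (Hardt--Lin / Maldonado) for weak solutions rather than excising critical points.
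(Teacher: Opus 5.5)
Your proposal is correct and follows the paper's route. The paper states that the proof of Proposition~\ref{thm:exact-identity} carries over unchanged: the radial domain variation gives the formula for $D'$, polar differentiation of $H$ together with the flux identity gives $H'$, and the algebra is identical, with $\frac1r+\frac{n-p}{r}-\frac{n-1}{r}=\frac{2-p}{r}$ making the result dimension-free. Handling possible interior critical points with the Hardt--Lin regularization, and proving the flux identity with the cutoff test function $v\chi_\eps$, is the right replacement for the planar excision argument of Lemma~\ref{lem:pohozaev}, which relied on Aronsson's asymptotics and is unavailable for $n\ge3$.
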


\subsection{A higher-dimensional criterion for \texorpdfstring{$\Theta\in L^1$}{Theta n in L1}}
\label{subsec:theta-n-L1}

Proposition~\ref{prop:dimension-free-identity}  shows that the only obstruction to a monotonicity formula is
the integrability of $\Theta$. We now give a verifiable sufficient condition.

\begin{defn}
\label{def:nondegenerate-homogeneous-tangent}
Let $u$ be $p$-harmonic in $B_R(x_0)\subset \mathbb R^n$, and set
$ v=u-u(x_0).$
Assume $Du(x_0)=0$. We say that $u$ has a nondegenerate homogeneous tangent
of degree $\lambda>1$ at $x_0$, with rate $\delta>0$, if there exists a nonzero
homogeneous $p$-harmonic function $P$ of degree $\lambda$, namely
$$
 P(r\theta)=r^\lambda P(\theta),
 \qquad r>0,\quad \theta\in S^{n-1},
$$
such that $\inf_{\theta\in S^{n-1}} |DP(\theta)|>0,$
and, for some constant $C_0>0$,
$$
\sum_{j=0}^2 r^j   \left|D^j v(x_0+r\theta)-D^jP(r\theta)\right| \le  C_0 r^{\lambda+\delta}
$$
for every $\theta\in S^{n-1}$ and every sufficiently small $r>0$.
Here $D^0 f=f$.
\end{defn}

\begin{thm}
\label{thm:theta-n-L1-nondegenerate}
Let $u$ be $p$-harmonic in $B_R(x_0)\subset \mathbb R^n$, $n\ge 3$, and assume
that $Du(x_0)=0$. Suppose that $u$ has a nondegenerate homogeneous tangent
$P$ of degree $\lambda>1$ at $x_0$, with rate $\delta>0$, in the sense of
Definition~\ref{def:nondegenerate-homogeneous-tangent}. Then, for all sufficiently
small $r>0$, $H(r)>0$, $N_\ast(r)$ is well-defined, and
$$
N_\ast(r)=\lambda+O(r^\delta).
$$
Moreover, $  \Theta(r)=O(r^{-1+\delta}). $
Consequently,
$  \Theta\in L^1(0,r_0) $
for some $r_0>0$, and the gauge-corrected frequency
$$  r\mapsto  \exp\left(\int_{r_0}^r \Theta(s)\,ds\right)N_\ast(r)
$$
is nondecreasing on $(0,r_0)$.
\end{thm}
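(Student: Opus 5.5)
We follow the planar proof of Theorem~\ref{thm:main}, with Aronsson's representation replaced by the hypotheses of Definition~\ref{def:nondegenerate-homogeneous-tangent}. Translate so that $x_0=0$ and $v=u$. Write $m=\inf_{S^{n-1}}|DP|>0$. By homogeneity, $|DP(r\theta)|=r^{\lambda-1}|DP(\theta)|\ge m r^{\lambda-1}$. The $j=1$ term of the hypothesis gives $|Du(r\theta)-DP(r\theta)|\le C_0r^{\lambda-1+\delta}$. Hence
$$
|Du(r\theta)|=r^{\lambda-1}|DP(\theta)|\bigl(1+O(r^\delta)\bigr),
$$
and in particular $Du\ne0$ on $B_{r_0}\setminus\{0\}$. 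So $u$ is smooth there, and Proposition~\ref{prop:dimension-free-identity} applies at every $r\in(0,r_0)$.

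The next step is to justify the flux and Pohozaev identities down to the singular centre. We argue as in Lemmas~\ref{lem:flux} and \ref{lem:pohozaev}: work on annuli $B_r\setminus\overline{B_\eps}$ and let $\eps\to0$. The inner boundary terms are bounded by $C\eps^{n-1}\eps^{\lambda}\eps^{(p-1)(\lambda-1)}$ and $C\eps^{n-1}\eps^{p(\lambda-1)}$, both of which tend to $0$ since $\lambda>1$.

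Then we compute the asymptotics. Raising the gradient expansion to powers gives
$$
D(r)=D_0r^{p\lambda-p+n}\bigl(1+O(r^\delta)\bigr),\qquad
H(r)=H_0r^{p\lambda-p+n+1}\bigl(1+O(r^\delta)\bigr),
$$
with $D_0=\frac{1}{p\lambda-p+n}\int_{S^{n-1}}|DP|^p$ and $H_0=\int_{S^{n-1}}|DP|^{p-2}P^2$. To handle $H$ we use $u^2=P^2+O(r^{2\lambda+\delta})$ together with $|P|\le Cr^\lambda$.

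We need $H_0>0$. If $P\equiv0$ on $S^{n-1}$, then $P\equiv0$ by homogeneity, which is excluded. So $P$ is nonzero on an open subset of the sphere, and since $|DP|^{p-2}\ge$ a positive constant there, $H_0>0$. This gives $H(r)>0$ for small $r$. The flux identity for $P$ combined with Euler's relation $\partial_\nu P=\lambda P/r$ yields $D_0=\lambda H_0$, and therefore $N_*(r)=\lambda+O(r^\delta)$.

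The main obstacle is $\Theta$. The $C^1$ information gives no control on $\partial_\nu A$; this is exactly where the $j=2$ term of the hypothesis is needed. We write
$$
\partial_\nu A=(p-2)|Du|^{p-4}\,D^2u\,Du\cdot\nu .
$$
For $P$ we have $D^2P(r\theta)=O(r^{\lambda-2})$, and the hypothesis gives $D^2u=D^2P+O(r^{\lambda-2+\delta})$. Combining this with the gradient comparison and the lower bound $|Du|\ge \tfrac m2 r^{\lambda-1}$ (which controls the possibly negative power $p-4$), we obtain
$$
\partial_\nu\log|Du|=\frac{D^2u\,Du\cdot\nu}{|Du|^2}
=\frac{D^2P\,DP\cdot\nu}{|DP|^2}+O(r^{-1+\delta})
$$
uniformly in $\theta$.

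Since $|DP|$ is homogeneous of degree $\lambda-1$, Euler's identity gives $\partial_r\log|DP|=(\lambda-1)/r$ exactly. Hence
$$
\frac{K}{H}=\frac{(p-2)\int_{\partial B_r}A v^2\,\partial_\nu\log|Du|\,dS}{H}=\frac{(p-2)(\lambda-1)}{r}+O(r^{-1+\delta}).
$$
Subtracting $(p-2)(N_*-1)/r=(p-2)(\lambda-1)/r+O(r^{-1+\delta})$ leaves $\Theta(r)=O(r^{-1+\delta})$. This is integrable on $(0,r_0)$, so the monotonicity of the gauge-corrected frequency follows exactly as in Corollary~\ref{cor:gauge-formal}.
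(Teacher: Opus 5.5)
Your proposal is correct and follows the paper's proof essentially step by step. Nondegeneracy gives $|Du(r\theta)|\approx r^{\lambda-1}$, and the asymptotics $D(r)=D_0r^{\alpha}(1+O(r^\delta))$, $H(r)=H_0r^{\alpha+1}(1+O(r^\delta))$ with $\alpha=p\lambda-p+n$ and $D_0=\lambda H_0$ (from the flux identity for $P$) yield $N_*=\lambda+O(r^\delta)$; the $j=1,2$ comparison then gives $\partial_\nu\log|Du|=(\lambda-1)/r+O(r^{-1+\delta})$, so the singular parts of $K/H$ and $(p-2)(N_*-1)/r$ cancel. Your extra checks supply details the paper leaves implicit: the annulus argument validating the flux and Pohozaev identities down to the critical centre, and the positivity of $H_0$.
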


\begin{proof}
After translating and subtracting a constant, assume $x_0=0$ and $u(0)=0$.
Write $\alpha=p\lambda-p+n.$
Since $P$ is homogeneous of degree $\lambda$,
$$
   DP(r\theta)=r^{\lambda-1}DP(\theta).
$$
The nondegeneracy assumption gives
$$
 |DP(\theta)|\ge c_0>0  \qquad \theta\in S^{n-1}.
$$
Hence, for all sufficiently small $r>0$,
$$
  c r^{\lambda-1}    \le   |Du(r\theta)|  \le    C r^{\lambda-1}   \qquad \theta\in S^{n-1}.
$$
In particular $Du\neq 0$ in the punctured ball $B_{r_0}\setminus\{0\}$, after
possibly decreasing $r_0$. Thus $u$ is smooth there and the quantities $K(r)$
and $\Theta(r)$ are classically defined for $0<r<r_0$.

We first compute $D(r)$ and $H(r)$. By the expansion in
Definition~\ref{def:nondegenerate-homogeneous-tangent},
$$
 Du(r\theta)=DP(r\theta)+O(r^{\lambda-1+\delta}) =  r^{\lambda-1}DP(\theta)+O(r^{\lambda-1+\delta}).
$$
Since $|DP(\theta)|\ge c_0$, this implies
$$
  |Du(r\theta)|^p   =  r^{p(\lambda-1)}  |DP(\theta)|^p   \left(1+O(r^\delta)\right).
$$
Therefore, using polar coordinates,
\begin{align}
D(r)
&=\int_0^r\int_{S^{n-1}}|Du(t\theta)|^p t^{n-}\,d\theta\,dt                        \notag \\
&=\int_0^r t^{p(\lambda-1)+n-1}\left[\int_{S^{n-1}} |DP(\theta)|^p\,d\theta+O(t^\delta)
\right]dt \notag  \\
&=D_0 r^\alpha(1+O(r^\delta)), \label{eq:D-n}
\end{align}
where
$$
  D_0=\int_{B_1}|DP|^p\,dx>0.
$$

Similarly,
$$
  v(r\theta)=P(r\theta)+O(r^{\lambda+\delta})  =  r^\lambda P(\theta)+O(r^{\lambda+\delta}),
$$
and so
\begin{align}
H(r)
&=\int_{\partial B_r}
|Du|^{p-2}v^2\,dS     \notag         \\
&= r^{(p-2)(\lambda-1)+2\lambda+n-1}
\int_{S^{n-1}}|DP(\theta)|^{p-2}P(\theta)^2\,d\theta
\left(1+O(r^\delta)\right)         \notag    \\
&=H_0 r^{\alpha+1}(1+O(r^\delta)), \label{eq:H-n}
\end{align}
where
$$
H_0=  \int_{S^{n-1}}  |DP(\theta)|^{p-2}P(\theta)^2\,d\theta>0.$$
Since $P$ is homogeneous and $p$-harmonic,
$\partial_\nu P=\frac{\lambda}{r}P.$ Applying the flux identity to $P$ on $B_1$, we get
$$
 D_0 = \int_{\partial B_1}|DP|^{p-2}P\partial_\nu P\,dS
 = \lambda \int_{\partial B_1}|DP|^{p-2}P^2\,dS
= \lambda H_0.
$$
Consequently, \eqref{eq:D-n} and \eqref{eq:H-n} yileds
$$
 N_\ast(r) = \frac{rD(r)}{H(r)}  = c{D_0}{H_0}(1+O(r^\delta))   =   \lambda+O(r^\delta).
$$

It remains to estimate $K(r)/H(r)$. Since $A=|Du|^{p-2},$
we have
$$
\partial_\nu A =  (p-2)A\,\partial_\nu\log |Du|.
$$
We claim that
\begin{equation}\label{eq:claim-log}
  \partial_\nu\log |Du(r\theta)|  =  \frac{\lambda-1}{r}
 +   O(r^{-1+\delta})
\end{equation}
uniformly in $\theta\in S^{n-1}$. Indeed,
$$
\partial_\nu\log |Du| = \frac{Du\cdot D^2u\,\nu}{|Du|^2}.
$$
The $j=1,2$ estimates in Definition~\ref{def:nondegenerate-homogeneous-tangent}
give
$$
 Du(r\theta)=DP(r\theta)+O(r^{\lambda-1+\delta}),
$$
and
$$
D^2u(r\theta)=D^2P(r\theta)+O(r^{\lambda-2+\delta}).
$$
Since $|DP(r\theta)|\ge c_0 r^{\lambda-1}$, comparison with $P$ yields
$$
\partial_\nu\log |Du(r\theta)|
 =\partial_\nu\log |DP(r\theta)|
  + O(r^{-1+\delta}).
$$
Moreover, 
$$
|DP(r\theta)|=r^{\lambda-1}|DP(\theta)|,
$$
and therefore
$$
\partial_\nu\log |DP(r\theta)|=\frac{\lambda-1}{r}.
$$
This proves \eqref{eq:claim-log}.

Using \eqref{eq:claim-log},
\begin{align*}
\frac{K(r)}{H(r)}
&=\frac{\int_{\partial B_r}v^2\partial_\nu A\,dS}{\int_{\partial B_r}Av^2\,dS}                                                        \\
&=(p-2)\frac{\int_{\partial B_r}Av^2\partial_\nu\log |Du|\,dS}{\int_{\partial B_r}Av^2\,dS}       \\
&= (p-2)\left[\frac{\lambda-1}{r}+O(r^{-1+\delta})\right].
\end{align*}
On the other hand,
$$
  \frac{(p-2)(N_\ast(r)-1)}{r}  =   (p-2)  \left[   \frac{\lambda-1}{r}  +  O(r^{-1+\delta})
  \right].
$$
Subtracting gives
$$
  \Theta(r) = \frac{K(r)}{H(r)} -   \frac{(p-2)(N_\ast(r)-1)}{r}  =  O(r^{-1+\delta}).
$$
Since $\delta>0$, this belongs to $L^1(0,r_0)$. The monotonicity of the
gauge-corrected frequency follows from Proposition~\ref{prop:dimension-free-identity}
by multiplying the identity by the integrating factor
$$
 \exp\left(\int_{r_0}^r \Theta(s)\,ds\right).
$$
\end{proof}

\begin{rem}[A weaker analytic version]
\label{rem:weaker-theta-criterion}
The $C^2$-rate in Definition~\ref{def:nondegenerate-homogeneous-tangent} is used
only to obtain
$$ \partial_\nu\log |Du(r\theta)|   =  \frac{\lambda-1}{r}
   +    O(r^{-1+\delta}).
$$
Therefore Theorem~\ref{thm:theta-n-L1-nondegenerate} remains valid if the
$C^2$-asymptotic assumption is replaced by the two assumptions
$$
 u(r\theta)=P(r\theta)+O(r^{\lambda+\delta}), \qquad
Du(r\theta)=DP(r\theta)+O(r^{\lambda-1+\delta}),
$$
together with the weighted logarithmic-gradient estimate
$$
\frac{\int_{\partial B_r}|Du|^{p-2}v^2\partial_\nu\log |Du|\,dS}{\int_{\partial B_r}|Du|^{p-2}v^2\,dS}
=\frac{\lambda-1}{r}+O(r^{-1+\delta}).
$$
This seems to be the minimal condition needed for $\Theta\in L^1$.
\end{rem}

\end{document}